\renewcommand*{\backref}[1]{}  
\renewcommand*{\backrefalt}[4]{
  \ifcase #1 %
  \relax
  \or
(Cited page~#2.)%
  \else
(Cited pages~#2.)%
  \fi}
\newtheorem{definition}{Definition}[section]
\newtheorem{lemma}[definition]{Lemma}
\newtheorem{proposition}[definition]{Proposition}
\newtheorem{corollary}[definition]{Corollary}
\newtheorem{theorem}[definition]{Theorem}
\newtheorem{example}[definition]{Example}
\def\a{\alpha}
\def\b{\beta}
\def\rawo\lonra{\longrightarrow}
\def\ot{\otimes}
\newenvironment{proof}{{\it Proof.}}{\hfill $ \square $ \vskip 4mm}
\begin{document}

\title{Tensor products and perturbations of \\
BiHom-Novikov-Poisson algebras
}
\author{Ling Liu \\
College of Mathematics and Computer Science,\\
Zhejiang Normal University, \\
Jinhua 321004, China \\
e-mail: ntliulin@zjnu.cn \and Abdenacer Makhlouf \\
Universit\'{e} de Haute Alsace, \\
IRIMAS-D\'epartement  de Math\'{e}matiques,  \\
6 bis, rue des fr\`{e}res Lumi\`{e}re, F-68093 Mulhouse, France\\
e-mail: Abdenacer.Makhlouf@uha.fr \and Claudia Menini \\
University of Ferrara, \\
Department of Mathematics and Computer Science\\
Via Machiavelli 30, Ferrara, I-44121, Italy\\
e-mail: men@unife.it \and Florin Panaite \\
Institute of Mathematics of the Romanian Academy,\\
PO-Box 1-764, RO-014700 Bucharest, Romania\\
e-mail: florin.panaite@imar.ro }
\date{}
\maketitle

\begin{abstract}
We study BiHom-Novikov-Poisson algebras, which are twisted generalizations of Novikov-Poisson algebras and
Hom-Novikov-Poisson algebras,
and find that BiHom-Novikov-Poisson algebras are closed under tensor products and several kinds of perturbations.
Necessary and sufficient conditions are given under which BiHom-Novikov-Poisson algebras give rise to
BiHom-Poisson algebras. \\

\begin{small}
\noindent \textbf{Keywords}: BiHom-Novikov-Poisson algebra, BiHom-Novikov algebra,
BiHom-Poisson algebra, BiHom-commutative algebra.\\
\textbf{MSC2010}: 17D99
\end{small}
\end{abstract}

\section*{Introduction}

Hom-type algebras appeared in the Physics literature of the 1990's, when looking for quantum deformations
of some algebras of vector fields, like Witt and Virasoro algebras
(\cite{AizawaSaito}, \cite{Hu}). It was observed that algebras obtained by deforming certain Lie algebras
no longer satisfied the Jacobi identity, but a
modified version of it involving a homomorphism. An axiomatization of this type of algebras
(called Hom-Lie algebras) was given in \cite{JDS}, \cite{DS}. The associative counterpart of Hom-Lie algebras
(called Hom-associative algebras) has been introduced in \cite{ms}, where it was proved also
that the commutator bracket defined by the multiplication in a Hom-associative
algebra gives rise to a Hom-Lie algebra. Since then, Hom analogues of many other classical algebraic structures have been
introduced and studied (such as Hom-bialgebras, Hom-pre-Lie algebras, Hom-dendriform algebras etc.).

 A categorical approach to Hom-type algebras was considered  in \cite{stef}.  An attempt to generalize the construction
in \cite{stef} by including a group action led in \cite{gmmp} to the observation that Hom-structures have a very
natural generalization, in which a classical algebraic identity is twisted by two (commuting) homomorphisms,
instead of just one. This new type of algebras has been called BiHom-algebras (examples include
BiHom-associative algebras, BiHom-Lie algebras, BiHom-bialgebras etc.). The main tool to obtain examples of
Hom-algebras from classical algebras, the so-called ''Yau twisting'', works perfectly fine also in the BiHom-type  case.
There is a growing literature on Hom and BiHom-type algebras, let us just mention the very recent papers
\cite{Canad}, \cite{Colloq}, \cite{lmmp4} (see also references therein).

Novikov algebras appeared in connection with the Poisson brackets of hydrodynamic
type  (\cite{bn}, \cite{dn1}, \cite{dn2}) and Hamiltonian operators in the formal variational calculus (\cite{gd},
\cite{Xu1}). They are a special class of pre-Lie algebras and are related to several branches of geometry and
mathematical physics, such as Lie groups and algebras, affine manifolds, vertex and conformal algebras etc.

Novikov-Poisson algebras were introduced by Xu in \cite{Xu1}, motivated by the study of simple
Novikov algebras and irreducible modules. A Novikov-Poisson algebra is a Novikov algebra having an extra structure
(a commutative associative product) together with some compatibility conditions between the two structures.
As proved by Xu in \cite{Xu1}, Novikov-Poisson algebras have the remarkable property (not shared by Novikov algebras) that
they are closed under tensor products; also, Xu proved in \cite{Xu2} that they are also closed under certain types
of perturbations.

Hom-type analogues of Novikov and Novikov-Poisson algebras have been introduced and studied by Yau in \cite{yaunovikov},
\cite{yau3}. Examples can be obtained by Yau twisting from their classical counterparts. Also, it turns out that many properties
of Novikov and Novikov-Poisson algebras are shared by their Hom-type  counterparts; in particular, Yau proved that
Hom-Novikov-Poisson algebras are closed under tensor products and certain types of perturbations.

In our previous paper \cite{lmmp4}, we introduced and studied the concept of BiHom-Novikov algebra
(which is different from the one introduced in \cite{guo}), in relation also with the so-called infinitesimal BiHom-bialgebras.
In a short section of \cite{lmmp4}, we introduced as well the concept of BiHom-Novikov-Poisson algebra and
provided some classes of examples. The aim of the present paper is to continue the study of BiHom-Novikov-Poisson algebras.
More precisely, we show that BiHom-Novikov-Poisson algebras are closed under tensor products and under certain types of perturbations, extending thus the results mentioned above obtained by Xu in the classical case and by Yau in the Hom-type case.

The most tricky part in what we are doing is contained in Lemma \ref{lemma 3.1}, and we try now to describe its content.
The perturbations used by Xu in the classical case are essentially based on the following obvious fact: if
$(A, \mu )$ is a commutative associative algebra (with multiplication $\mu $ denoted by juxtaposition) and $a\in A$ is a fixed
element, and we define a new multiplication on $A$ by $x\diamond y=axy$, then $(A, \diamond )$ is also a
commutative associative algebra. The Hom-type  analogue of this result (Lemma 4.1 in Yau's paper \cite{yau3})
reads as follows: if $(A, \mu , \alpha )$ is a commutative Hom-associative algebra and $a\in A$ is an element such that
$\alpha ^2(a)=a$ and we define a new multiplication on $A$ by $x\diamond y=a(xy)$, then $(A, \diamond , \alpha ^2)$ is also a
commutative Hom-associative algebra. We have tried to extend this result (by generalizing the formula for
$\diamond $) to the BiHom-type case and we failed. Then we realized that actually the natural formula for $\diamond $
that has to be extended (both in the Hom-type  and BiHom-type case) is not $x\diamond y=axy$, but $x\diamond y=xay$ (which
is associative even if the original multiplication is noncommutative). Thus, our Lemma \ref{lemma 3.1} reads as follows: if
$(A, \mu , \alpha , \beta )$ is a BiHom-associative algebra and $a\in A$ is an element satisfying
$\a^2(a)=\b^2 (a)=a$ and we define a new operation on $A$ by
$x\diamond y=\alpha (x)(\alpha (a)y)$, then $(A, \diamond , \alpha^2 , \beta^2 )$ is also a BiHom-associative algebra;
if moreover
$(A, \mu , \alpha , \beta )$ is BiHom-commutative, then $(A, \diamond , \alpha^2 , \beta^2 )$ is also
BiHom-commutative. It turns out that Yau's result can be obtained as a particular case of this (see the comments after our
Lemma \ref{lemma 3.1}).

In the last section of the paper we investigate a special class of BiHom-Novikov-Poisson algebras, called
''left BiHom-associative''; in the case of bijective structure maps, these are exactly those BiHom-Novikov-Poisson algebras
that give rise to so-called BiHom-Poisson algebras 
(a concept we introduce in this paper, where we consider a left-handed Leibniz identity, we refer to \cite{adimi}
 for the concept dealing with a right-handed version)
see Theorem
\ref{theorem 4.5}. It turns out that left BiHom-associativity is preserved by Yau twisting, by tensor products and
by the perturbations mentioned above.


\section{Preliminaries} \label{sec1}
\setcounter{equation}{0}

We work over a base field $\Bbbk $. All
algebras, linear spaces etc. will be over $\Bbbk $; unadorned $\otimes $
means $\otimes_{\Bbbk}$. We denote by $_{\Bbbk }\mathcal{M}$ the category of linear spaces over $\Bbbk $.
Unless otherwise specified, the
algebras (associative or not) that will appear in what follows are
\emph{not} supposed to be unital, and a multiplication $\mu :A\otimes
A\rightarrow A$ on a linear space $A$ is denoted by juxtaposition: $\mu
(v\otimes v^{\prime })=vv^{\prime }$. For the composition of two maps $f$
and $g$, we write either $g\circ f$ or simply $gf$. For the identity
map on a linear space $A$ we use the notation $id_A$.

\begin{definition} (\cite{gmmp})
A BiHom-associative algebra is a 4-tuple $\left( A,\mu ,\alpha ,\beta \right) $, where $A$ is
a linear space and $\alpha , \beta :A\rightarrow A$
and $\mu :A\otimes A\rightarrow A$ are linear maps such that
$\alpha \circ \beta =\beta \circ \alpha $,
$\alpha (xy) =\alpha (x)\alpha (y)$, $\beta (xy)=\beta (x)\beta (y)$ and
\begin{eqnarray}
\alpha (x)(yz)=(xy)\beta (z), \;\;\;\forall \; x, y, z\in A. \label{BHassoc}
\end{eqnarray}
The maps $\alpha $ and $\beta $ (in this order) are called the structure maps
of $A$ and condition (\ref{BHassoc}) is called the BiHom-associativity condition.
A morphism $f:(A, \mu _A , \alpha _A, \beta _A)\rightarrow (B, \mu _B ,
\alpha _B, \beta _B)$ of BiHom-associative algebras is a linear map $%
f:A\rightarrow B$ such that $\alpha _B\circ f=f\circ \alpha _A$, $\beta
_B\circ f=f\circ \beta _A$ and $f\circ \mu_A=\mu _B\circ (f\otimes f)$.
\end{definition}

If $(A, \mu )$ is an associative algebra and $\alpha , \beta :A\rightarrow A$ are two commuting algebra maps, then
$A_{(\alpha , \beta )}:=(A, \mu \circ (\alpha \otimes \beta ), \alpha , \beta )$ is a BiHom-associative algebra,
called the Yau twist of $A$ given by the maps $\alpha $ and $\beta $. 

\begin{definition}(\cite{lmmp4})
\label{commbinovi} A BiHom-associative algebra $(A, \mu, \alpha , \beta)$ is
called BiHom-commutative if
\begin{eqnarray}
\beta(a)\alpha(b)=\beta(b)\alpha(a), \;\;\; \forall \;\;a, b\in A. \label{commu}
\label{commnovi}
\end{eqnarray}
\end{definition}
\begin{definition}
A left pre-Lie algebra is an algebra $(A, * )$ for which
\begin{eqnarray*}
&&x*(y*z)-(x*y)*z=y*(x*z)-(y*x)*z, \;\;\;  \forall \;\;x, y, z\in A.
\end{eqnarray*}

An algebra $(A, * )$ is called Novikov algebra if it is left pre-Lie and
\begin{eqnarray*}
&&(x*y)*z=(x*z)*y, \;\;\; \forall \;x, y, z\in A.
\end{eqnarray*}
A morphism of Novikov algebras from $(A, * )$ to $(A', * ')$ is a linear map
$f :A\rightarrow A'$ satisfying $f(x*y)=f(x)*'f(y)$, for all $x, y\in A$.
\end{definition}

\begin{definition} (\cite{Xu1}, \cite{Xu2}) A Novikov-Poisson algebra is a triple $(A, \mu , *)$ such that
$(A, \mu )$ is a  commutative associative algebra, $(A, * )$ is a Novikov algebra and
the following compatibility conditions hold, for all $x, y, z\in A$:
 \begin{eqnarray}
&& (x*y)z-x* (yz)=(y*x)z-y* (xz),~~~~ \label{NP1}\\
 && (xy)* z=(x*z)y. \label{NP2}
\end{eqnarray}

A morphism of Novikov-Poisson algebras from $(A, \mu , * )$ to $(A', \mu ', *')$ is a linear map
$f :A\rightarrow A'$ satisfying $f\circ \mu =\mu '\circ (f\otimes f)$ and $f(x*y)=f(x)*'f(y)$, for all $x, y\in A$.
\end{definition}

Note that, by the commutativity of $(A, \mu )$, (\ref{NP2}) is equivalent to
\begin{eqnarray}
&&(xy)*z=x(y*z), \;\;\; \forall \; x, y, z\in A. \label{NP3}
\end{eqnarray}

\begin{definition}(\cite{lmmp4})
\label{binovialgebra} A BiHom-Novikov algebra is a 4-tuple $(A, \ast, \alpha
, \beta)$, where $A$ is a linear space, $\ast: A \otimes A\rightarrow A$ is a
linear map and $%
\alpha , \beta : A \rightarrow A$ are commuting linear maps (called the structure maps of $A$),
satisfying the
following conditions, for all $x, y, z \in A$:
\begin{eqnarray}
&& \alpha(x\ast y)=\alpha(x)\ast \alpha(y),~~ \beta(x\ast y)=\beta(x)\ast \beta(y),
\label{BiNovik} \\
&&
(\beta(x)\ast \alpha(y))\ast \beta(z)-\alpha\beta(x)\ast (\alpha(y)\ast z) \nonumber \\
&&\;\;\;\;\;\;\;\;\;\;\;\;=(\beta(y)\ast \alpha(x))\ast
\beta(z)-\alpha\beta(y)\ast (\alpha(x)\ast z),    \label{BiNoviko} \\
&& (x \ast \beta (y))\ast \alpha\beta(z)=(x \ast \beta (z))\ast \alpha\beta(y).
\label{Binovikov}
\end{eqnarray}

A morphism $f:(A, \ast _A , \alpha _A, \beta _A)\rightarrow (B, \ast _B ,
\alpha _B, \beta _B)$ of BiHom-Novikov algebras is a linear map
$f:A\rightarrow B$ such that $\alpha _B\circ f=f\circ \alpha _A$, $\beta
_B\circ f=f\circ \beta _A$ and $f(x\ast _Ay)=f(x)\ast _Bf(y)$, for all $x, y\in A$.
\end{definition}

\begin{definition}\label{binovipoisson algebra}(\cite{lmmp4})
A BiHom-Novikov-Poisson algebra is a 5-tuple $(A, \mu , \ast, \alpha, \beta)$ such that:

(1) $(A, \mu ,  \alpha , \beta)$ is a  BiHom-commutative algebra;

(2) $(A, \ast,  \alpha , \beta)$ is a BiHom-Novikov algebra;

(3) the following compatibility conditions hold for all $x, y, z\in A$:
 \begin{eqnarray}
&& (\beta(x)\ast \alpha(y)) \beta(z)-\alpha\beta(x)\ast (\alpha(y)z)=(\beta(y)\ast \alpha(x))\beta(z)-\alpha\beta(y)\ast (\alpha(x)z),~~~~ \label{4.1}\\
 && (x \beta (y))\ast \alpha\beta(z)=(x\ast \beta (z))\alpha\beta(y),
\label{4.2}\\
&&\alpha (x)(y\ast z)=(xy)\ast \beta (z). \label{1.11}
\end{eqnarray}

The maps $\alpha $ and $\beta $ (in this order) are called the structure maps of $A$.

A morphism $f:(A, \mu , \ast, \alpha, \beta)\rightarrow (A', \mu ', \ast ', \alpha ', \beta ')$ of BiHom-Novikov-Poisson
algebras is a map that is a morphism of BiHom-associative algebras from $(A, \mu , \alpha , \beta )$ to
$(A', \mu ', \alpha ', \beta ')$ and a morphism of BiHom-Novikov algebras from $(A, \ast , \alpha , \beta )$ to
$(A', \ast ', \alpha ', \beta ')$.
\end{definition}

From Lemma 3.2 in \cite{lmmp4}, we know that if $\alpha $ and $\beta $ are bijective, the conditions (\ref{4.2})
and (\ref{1.11}) are equivalent (assuming (1) and multiplicativity of $\alpha $ and $\beta $ with respect to $\ast $).
\section{Tensor products} \label{sec2}
\setcounter{equation}{0}
Novikov algebras are not closed under tensor products in a non-trivial way. One reason Novikov-Poisson algebras
were introduced in \cite{Xu1}
was that the tensor product of two Novikov-Poisson algebras is a Novikov-Poisson algebra
non-trivially (\cite{Xu1}, Theorem 4.1). This fact was extended by Yau (see \cite{yau3}) to the class of Hom-Novikov-Poisson
algebras, and the aim of this section is to further extend it to
BiHom-Novikov-Poisson algebras.

\begin{theorem} \label {tensor}  Let $(A_i, \cdot_i, \ast_i, \alpha_i, \beta_i )$ be BiHom-Novikov-Poisson algebras, for $i=1, 2$. Then $A:=(A_1\otimes A_2, \cdot, \ast, \alpha_1 \otimes \alpha_2 ,
\beta_1\otimes \beta_2 )$ is a BiHom-Novikov-Poisson algebra, where the operations $\cdot $ and $\ast $ are
defined as follows (for $x_i, y_i \in A_i$):
\begin{eqnarray*}
(x_1\otimes x_2)\cdot (y_1\otimes y_2)&=&(x_1\cdot_1 y_1)\otimes (x_2\cdot_2 y_2),\\
(x_1\otimes x_2)\ast (y_1\otimes y_2)&=&(x_1\ast_1 y_1)\otimes (x_2\cdot_2 y_2)+(x_1\cdot_1 y_1)\otimes (x_2\ast_2 y_2).
\end{eqnarray*}
\end{theorem}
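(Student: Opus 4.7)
The plan is to verify, one by one, each defining axiom of a BiHom-Novikov-Poisson algebra for the 5-tuple $(A_1 \otimes A_2, \cdot, \ast, \alpha_1 \otimes \alpha_2, \beta_1 \otimes \beta_2)$. The multiplicativity of $\alpha_1 \otimes \alpha_2$ and $\beta_1 \otimes \beta_2$ with respect to both $\cdot$ and $\ast$, their mutual commutativity, and the BiHom-commutativity and BiHom-associativity of $(A, \cdot)$ all follow immediately in a componentwise manner from the analogous properties of each factor $A_i$, so I dispose of these first.

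The substantive content lies in verifying the BiHom-Novikov identities (\ref{BiNoviko}), (\ref{Binovikov}) for $\ast$, together with the compatibility conditions (\ref{4.1}), (\ref{4.2}) and (\ref{1.11}). For each such identity I would expand both sides on a tensor of decomposable elements $x_i \otimes x_j$, $y_i \otimes y_j$, $z_i \otimes z_j$. Since $\ast$ on $A_1 \otimes A_2$ is a sum of two monomials (one of type $\ast_1 \otimes \cdot_2$ and one of type $\cdot_1 \otimes \ast_2$), every application of $\ast$ doubles the number of summands, while each application of $\cdot$ preserves it. After expansion I would collect terms by the shape of their tensor components, i.e.\ by which of $\cdot_i, \ast_i$ appears in the $i$-th slot, and try to match them up in a way that reduces the identity on each homogeneous shape to either a single-factor axiom from Definition~\ref{binovipoisson algebra} or a routine consequence of BiHom-commutativity and BiHom-associativity of $\cdot_i$.

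The main obstacle will be the bookkeeping for (\ref{BiNoviko}) and (\ref{4.1}). For (\ref{BiNoviko}), expansion of each of the four outer summands produces four monomials, giving sixteen in total, and cancellations do not all occur within a single ``shape''. The sixteen monomials split into four groups according to which slot carries each of the two $\ast$'s. In the two ``pure'' groups (where both $\ast$'s sit in the same factor $i$), the factor-$j$ tensor component is of the form $(uv)w$ or $u(vw)$ in $\cdot_j$, which after BiHom-associativity of $\cdot_j$ and BiHom-commutativity (\ref{commu}) of $\cdot_j$ becomes symmetric in the $x_j, y_j$ slots; the difference of the pure-group terms on the two sides of (\ref{BiNoviko}) then factors as the factor-$i$ BiHom-Novikov identity (\ref{BiNoviko}) times this common symmetric factor-$j$ quantity and hence vanishes. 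In the two ``mixed'' groups (where one $\ast$ is in each factor) the argument is analogous but the factor-level identity needed is the mixed compatibility (\ref{4.1}) of each factor, together with (\ref{1.11}) of the other factor used to rewrite one $\ast_j$-term into the form required by (\ref{4.1}). The identity (\ref{4.1}) for the tensor product decomposes by the same four-group scheme.

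Finally, (\ref{Binovikov}), (\ref{4.2}) and (\ref{1.11}) for the tensor product should be easier: in each case the symmetry or equality to be checked is between $y$ and $z$ (or between the two sides of a single identity), the expansion yields a smaller number of monomials grouped by shape, and on each shape one applies directly the corresponding identity (\ref{Binovikov}), (\ref{4.2}) or (\ref{1.11}) on one factor together with the BiHom-associativity of $\cdot$ on the other. Once the matching scheme established for (\ref{BiNoviko}) is in place, these last three reductions are essentially mechanical.
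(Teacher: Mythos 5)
Your proposal is correct and follows essentially the same route as the paper: componentwise verification of the easy axioms, then expansion of each remaining identity into monomials grouped by which tensor slots carry the $\ast$'s, with the pure groups handled by the single-factor identities (\ref{BiNoviko}), (\ref{Binovikov}) together with BiHom-commutativity/associativity of $\cdot$ in the other slot, and the mixed groups handled by (\ref{4.1}), (\ref{4.2}) after using (\ref{1.11}) and (\ref{BHassoc}) to put terms into matching form. The only cosmetic difference is that the paper expands just one side of each symmetry identity (eight monomials for (\ref{BiNoviko})) and shows it is symmetric, rather than comparing all sixteen monomials of both sides.
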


\begin{proof} To improve readability, we omit the subscripts in $\cdot_{i}$ and $\ast_{i}$ in the proof.

(1) We show that $(A_1\otimes A_2, \cdot, \alpha_1 \otimes \alpha_2 ,
\beta_1\otimes \beta_2 )$ is a BiHom-commutative algebra. For $x_1, y_1\in A_1$, $x_2, y_2 \in A_2$, it is easy to check that $(\a_1\ot \a_2)[(x_1\otimes x_2)\cdot (y_1\otimes y_2)]=[(\a_1\ot \a_2)(x_1\otimes x_2)]\cdot [(\a_1\ot \a_2)(y_1\otimes y_2)]$ and $(\b_1\ot \b_2)[(x_1\otimes x_2)\cdot (y_1\otimes y_2)]=[(\b_1\ot \b_2)(x_1\otimes x_2)]\cdot [(\b_1\ot \b_2)(y_1\otimes y_2)]$.

We verify the BiHom-associativity condition (\ref{BHassoc}):\\[2mm]
$[(\a_1 \ot \a_2)(x_1\otimes x_2)]\cdot [(y_1 \ot y_2)\cdot(z_1\otimes z_2)]$
\begin{eqnarray*}
&=& (\a_1(x_1)\otimes \a_2(x_2))\cdot[(y_1\cdot z_1)\otimes (y_2\cdot z_2)]
=(\a_1(x_1)\cdot (y_1\cdot z_1))\otimes (\a_2(x_2)\cdot (y_2\cdot z_2))\\
&\overset{(\ref{BHassoc})}{=}& ((x_1\cdot y_1)\cdot \b_1(z_1))\otimes ((x_2\cdot y_2)\cdot \b_2(z_2))
=[(x_1\cdot y_1)\otimes (x_2\cdot y_2)]\cdot (\b_1(z_1)\otimes \b_2(z_2))\\
&=& [(x_1\otimes x_2)\cdot (y_1\otimes y_2)]\cdot [(\b_1\ot \b_2)(z_1\otimes z_2)].
\end{eqnarray*}
Now we check the BiHom-commutativity condition (\ref{commu}):
\begin{eqnarray*}
[(\b_1 \ot \b_2)(x_1\otimes x_2)]\cdot [(\a_1 \ot \a_2)(y_1\otimes y_2)]
&=& (\b_1(x_1)\cdot \a_1(y_1))\otimes (\b_2(x_2)\cdot \a_2(y_2))\\
&\overset{(\ref{commu})}{=}& (\b_1(y_1)\cdot \a_1(x_1))\otimes (\b_2(y_2)\cdot \a_2(x_2))\\
&=& [(\b_1 \ot \b_2)(y_1\otimes y_2)]\cdot [(\a_1 \ot \a_2)(x_1\otimes x_2)].
\end{eqnarray*}
So indeed $A$ is a BiHom-commutative algebra.

(2) We prove that $(A_1\otimes A_2, \ast, \a=\alpha_1 \otimes \alpha_2 ,
\b=\beta_1\otimes \beta_2 )$ is a BiHom-Novikov algebra. It is obvious that (\ref{BiNovik}) holds. We prove now
 (\ref{Binovikov}). For $x=x_1\otimes x_2, y=y_1\otimes y_2$ and $z=z_1\otimes z_2$ in $A$, we have to show that $(x\ast \b(y))\ast \a \b(z)$ is symmetric in $y$ and $z$. We have:\\[2mm]
${\;\;\;}$
$(x\ast \b(y))\ast \a \b(z)$
\begin{eqnarray*}
&=& [(x_1\ast \b_1(y_1))\otimes (x_2\cdot \b_2(y_2))+(x_1\cdot \b_1(y_1))\otimes (x_2\ast \b_2(y_2))]\ast (\a_1 \b_1(z_1)\otimes \a_2\b_2(z_2))\\
&=& \underbrace{[(x_1\ast \b_1(y_1))\ast \a_1\b_1(z_1)]\otimes [(x_2\cdot \b_2(y_2))\cdot \a_2\b_2(z_2)]}_a \\
&& + \underbrace{[(x_1\ast \b_1(y_1))\cdot \a_1\b_1(z_1)]\otimes [(x_2\cdot \b_2(y_2))\ast \a_2\b_2(z_2)]}_{b(x, y, z)} \\
&& + \underbrace{[(x_1\cdot \b_1(y_1))\ast \a_1\b_1(z_1)]\otimes [(x_2\ast \b_2(y_2))\cdot \a_2\b_2(z_2)]}_{c(x, y, z)} \\
&& + \underbrace{[(x_1\cdot \b_1(y_1))\cdot \a_1\b_1(z_1)]\otimes [(x_2\ast \b_2(y_2))\ast \a_2\b_2(z_2)]}_{d}.
\end{eqnarray*}
 We prove that $a$ is symmetric in $y$ and $z$ and leave to prove the same thing about
$d$ to the reader:
\begin{eqnarray*}
a &=&[(x_1\ast \b_1(y_1))\ast \a_1\b_1(z_1)]\otimes [(x_2\cdot \b_2(y_2))\cdot \a_2\b_2(z_2)]\\
&\overset{(\ref{BHassoc}), (\ref{Binovikov})}{=}& [(x_1\ast \b_1(z_1))\ast \a_1\b_1(y_1)]
\otimes [\a_2(x_2)\cdot (\b_2(y_2)\cdot \a_2(z_2))]\\
&\overset{(\ref{commu})}{=}& [(x_1\ast \b_1(z_1))\ast \a_1\b_1(y_1)]\otimes [\a_2(x_2)\cdot (\b_2(z_2)\cdot \a_2(y_2))]\\
&\overset{(\ref{BHassoc})}{=}& [(x_1\ast \b_1(z_1))\ast \a_1\b_1(y_1)]\otimes [(x_2\cdot \b_2(z_2))\cdot \a_2\b_2(y_2)].
\end{eqnarray*}
Moreover, we have
\begin{eqnarray*}
b(x, y, z)&=& [(x_1\ast \b_1(y_1))\cdot \a_1\b_1(z_1)]\otimes [(x_2\cdot \b_2(y_2))\ast \a_2\b_2(z_2)]\\
&\overset{(\ref{4.2})}{=}& [(x_1\cdot \b_1(z_1))\ast \a_1\b_1(y_1)]\otimes [(x_2\ast \b_2(z_2))\cdot \a_2\b_2(y_2)]
=c(x, z, y).
\end{eqnarray*}
Thus
\begin{eqnarray*}
(x\ast \b(y))\ast \a \b(z)&=& a + b(x, y, z) + c(x, y, z) +d,\\
(x\ast \b(z))\ast \a \b(y)&=& a + b(x, z, y) + c(x, z, y) +d\\
&=& a + c(x, y, z) + b(x, y, z) +d
=(x\ast \b(y))\ast \a \b(z).
\end{eqnarray*}
This shows that $(x\ast \b(y))\ast \a \b(z)$ is symmetric in $y$ and $z$, so formula (\ref{Binovikov}) holds in $A$.

For (\ref{BiNoviko}) in $A$ we must show that
$(\b(x) \ast \a(y))\ast \b(z)- \a\b(x)\ast(\a(y)\ast z)$ with respect to $\ast$ in $A$ is symmetric in $x$ and $y$.
We compute:\\[2mm]
$(\b(x) \ast \a(y))\ast \b(z)- \a\b(x)\ast(\a(y)\ast z)$
\begin{eqnarray*}
&=& [(\b_1(x_1)\ast \a_1(y_1))\ast \b_1(z_1)]\otimes [(\b_2(x_2)\cdot \a_2(y_2))\cdot \b_2(z_2)]\\
&& +[(\b_1(x_1)\ast \a_1(y_1))\cdot \b_1(z_1)]\otimes [(\b_2(x_2)\cdot \a_2(y_2))\ast \b_2(z_2)]\\
&& + [(\b_1(x_1)\cdot \a_1(y_1))\ast \b_1(z_1)]\otimes [(\b_2(x_2)\ast \a_2(y_2))\cdot \b_2(z_2)]\\
&& + [(\b_1(x_1)\cdot \a_1(y_1))\cdot \b_1(z_1)]\otimes [(\b_2(x_2)\ast \a_2(y_2))\ast \b_2(z_2)]\\
&& -[\a_1\b_1(x_1)\ast (\a_1(y_1)\ast z_1)] \otimes [\a_2\b_2(x_2)\cdot (\a_2(y_2)\cdot z_2)]\\
&& -[\a_1\b_1(x_1)\cdot (\a_1(y_1)\ast z_1)] \otimes [\a_2\b_2(x_2)\ast (\a_2(y_2)\cdot z_2)]\\
&& -[\a_1\b_1(x_1)\ast (\a_1(y_1)\cdot z_1)] \otimes [\a_2\b_2(x_2)\cdot (\a_2(y_2)\ast z_2)]\\
&& -[\a_1\b_1(x_1)\cdot (\a_1(y_1)\cdot z_1)] \otimes [\a_2\b_2(x_2)\ast (\a_2(y_2)\ast z_2)]\\
&\overset{(\ref{BHassoc})}{\underset{(\ref{1.11})}{=}}& [(\b_1(x_1)\ast \a_1(y_1))\ast \b_1(z_1)]
\otimes [(\b_2(x_2)\cdot \a_2(y_2))\cdot \b_2(z_2)]\\
&& + [(\b_1(x_1)\ast \a_1(y_1))\cdot \b_1(z_1)]\otimes [(\b_2(x_2)\cdot \a_2(y_2))\ast \b_2(z_2)]\\
&& + [(\b_1(x_1)\cdot \a_1(y_1))\ast \b_1(z_1)]\otimes [(\b_2(x_2)\ast \a_2(y_2))\cdot \b_2(z_2)]\\
&& + [(\b_1(x_1)\cdot \a_1(y_1))\cdot \b_1(z_1)]\otimes [(\b_2(x_2)\ast \a_2(y_2))\ast \b_2(z_2)]\\
&& -[\a_1\b_1(x_1)\ast (\a_1(y_1)\ast z_1)] \otimes [(\b_2(x_2)\cdot \a_2(y_2))\cdot\b_2( z_2)]\\
&& -[(\b_1(x_1)\cdot \a_1(y_1))\ast \b_1(z_1)] \otimes [\a_2\b_2(x_2)\ast (\a_2(y_2)\cdot z_2)]\\
&& -[\a_1\b_1(x_1)\ast (\a_1(y_1)\cdot z_1)] \otimes [(\b_2(x_2)\cdot \a_2(y_2))\ast \b_2(z_2)]\\
&& -[(\b_1(x_1)\cdot \a_1(y_1))\cdot \b_1(z_1)] \otimes [\a_2\b_2(x_2)\ast (\a_2(y_2)\ast z_2)]\\
&=& [(\b_1(x_1)\ast \a_1(y_1))\ast \b_1(z_1)-\a_1\b_1(x_1)\ast (\a_1(y_1)\ast z_1)]\otimes [(\b_2(x_2)\cdot \a_2(y_2))\cdot \b_2(z_2)]\\
&& +[(\b_1(x_1)\cdot \a_1(y_1))\cdot \b_1(z_1)]\otimes [(\b_2(x_2)\ast \a_2(y_2))\ast \b_2(z_2)-\a_2\b_2(x_2)\ast (\a_2(y_2)\ast z_2)]\\
&& +[(\b_1(x_1)\ast \a_1(y_1))\cdot \b_1(z_1)-\a_1\b_1(x_1)\ast (\a_1(y_1)\cdot z_1)]
\otimes [(\b_2(x_2)\cdot \a_2(y_2))\ast \b_2(z_2)]\\
&& +[(\b_1(x_1)\cdot \a_1(y_1))\ast \b_1(z_1)]\otimes [(\b_2(x_2)\ast \a_2(y_2))\cdot \b_2(z_2)-\a_2\b_2(x_2)\ast (\a_2(y_2)\cdot z_2)]\\
&\overset{(\ref{commu}), (\ref{BiNoviko})}{\underset{(\ref{4.1})}{=}}& [(\b_1(y_1)\ast \a_1(x_1))\ast \b_1(z_1)-\a_1\b_1(y_1)\ast (\a_1(x_1)\ast z_1)]\otimes [(\b_2(y_2)\cdot \a_2(x_2))\cdot \b_2(z_2)]\\
&& +[(\b_1(y_1)\cdot \a_1(x_1))\cdot \b_1(z_1)]\otimes [(\b_2(y_2)\ast \a_2(x_2))\ast \b_2(z_2)-\a_2\b_2(y_2)\ast (\a_2(x_2)\ast z_2)]\\
&& +[(\b_1(y_1)\ast \a_1(x_1))\cdot \b_1(z_1)-\a_1\b_1(y_1)\ast (\a_1(x_1)\cdot z_1)]
\otimes [(\b_2(y_2)\cdot \a_2(x_2))\ast \b_2(z_2)]\\
&& +[(\b_1(y_1)\cdot \a_1(x_1))\ast \b_1(z_1)]\otimes [(\b_2(y_2)\ast \a_2(x_2))\cdot \b_2(z_2)-\a_2\b_2(y_2)\ast (\a_2(x_2)\cdot z_2)]\\
&=& (\b(y) \ast \a(x))\ast \b(z)- \a\b(y)\ast(\a(x)\ast z).
\end{eqnarray*}
(3) We prove the relations (\ref{4.1})-(\ref{1.11}). To prove (\ref{4.2}) in $A$, we compute:\\[2mm]
$(x\cdot \b(y))\ast \a \b(z)$
\begin{eqnarray*}
&=& [(x_1\cdot \b_1(y_1))\otimes (x_2\cdot \b_2(y_2))]\ast (\a_1\b_1(z_1)\otimes \a_2\b_2(z_2))\\
&=& [(x_1\cdot \b_1(y_1))\ast \a_1\b_1(z_1)]\otimes [(x_2\cdot \b_2(y_2))\cdot \a_2\b_2(z_2)]\\
&& +[(x_1\cdot \b_1(y_1))\cdot \a_1\b_1(z_1)]\otimes [(x_2\cdot \b_2(y_2))\ast \a_2\b_2(z_2)]\\
&\overset{(\ref{BHassoc})}{\underset{(\ref{4.2})}{=}}& [(x_1\ast \b_1(z_1))\cdot \a_1\b_1(y_1)]\otimes [\a_2(x_2)\cdot (\b_2(y_2)\cdot \a_2(z_2))]\\
&& +[\a_1(x_1)\cdot (\b_1(y_1)\cdot \a_1(z_1))]\otimes [(x_2\ast \b_2(z_2))\cdot \a_2\b_2(y_2)]\\
&\overset{(\ref{commu})}{=}& [(x_1\ast \b_1(z_1))\cdot \a_1\b_1(y_1)]\otimes [\a_2(x_2)\cdot (\b_2(z_2)\cdot \a_2(y_2))]\\
&& +[\a_1(x_1)\cdot (\b_1(z_1)\cdot \a_1(y_1))]\otimes [(x_2\ast \b_2(z_2))\cdot \a_2\b_2(y_2)]\\
&\overset{(\ref{BHassoc})}{=}& [(x_1\ast \b_1(z_1))\cdot \a_1\b_1(y_1)]\otimes [(x_2\cdot \b_2(z_2))\cdot \a_2\b_2(y_2)]\\
&& +[(x_1\cdot \b_1(z_1))\cdot \a_1\b_1(y_1)]\otimes [(x_2\ast \b_2(z_2))\cdot \a_2\b_2(y_2)]\\
&=& [(x_1\ast \b_1(z_1))\otimes (x_2\cdot \b_2(z_2)) + (x_1\cdot \b_1(z_1))\otimes (x_2\ast \b_2(z_2))]
\cdot (\a_1\b_1(y_1) \otimes \a_2\b_2(y_2))\\
&=& [(x_1\otimes x_2)\ast (\b_1(z_1)\otimes \b_2(z_2))]\cdot (\a_1\b_1(y_1) \otimes \a_2\b_2(y_2))=
(x\ast \b(z))\cdot \a \b(y).
\end{eqnarray*}
To prove (\ref{4.1}) in $A$, we compute: \\[2mm]
${\;\;\;}$
$(\beta(x)\ast \alpha(y))\cdot \beta(z)-\alpha\beta(x)\ast (\alpha(y)\cdot z)$
\begin{eqnarray*}
&=& [(\b_1(x_1)\cdot \a_1(y_1))\cdot \b_1(z_1)]\otimes [(\b_2(x_2)\ast \a_2(y_2))\cdot \b_2(z_2)]\\
&& +[(\b_1(x_1)\ast \a_1(y_1))\cdot \b_1(z_1)]\otimes [(\b_2(x_2)\cdot \a_2(y_2))\cdot \b_2(z_2)]\\
&& -[\a_1\b_1(x_1)\cdot (\a_1(y_1)\cdot z_1)] \otimes [\a_2\b_2(x_2)\ast (\a_2(y_2)\cdot z_2)]\\
&& -[\a_1\b_1(x_1)\ast (\a_1(y_1)\cdot z_1)] \otimes [\a_2\b_2(x_2)\cdot (\a_2(y_2)\cdot z_2)]\\
&\overset{(\ref{BHassoc})}{=}&[(\b_1(x_1)\cdot \a_1(y_1))\cdot \b_1(z_1)]\otimes [(\b_2(x_2)\ast \a_2(y_2))\cdot \b_2(z_2)-\a_2\b_2(x_2)\ast (\a_2(y_2)\cdot z_2)]\\
&& +[(\b_1(x_1)\ast \a_1(y_1))\cdot \b_1(z_1)-\a_1\b_1(x_1)\ast (\a_1(y_1)\cdot z_1)]\otimes [(\b_2(x_2)\cdot \a_2(y_2))\cdot \b_2(z_2)]\\
&\overset{(\ref{commu})}{\underset{(\ref{4.1})}{=}}& [(\b_1(y_1)\cdot \a_1(x_1))\cdot \b_1(z_1)]\otimes [(\b_2(y_2)\ast \a_2(x_2))\cdot \b_2(z_2)- \a_2\b_2(y_2)\ast (\a_2(x_2)\cdot z_2)]\\
&& +[(\b_1(y_1)\ast \a_1(x_1))\cdot \b_1(z_1)-\a_1\b_1(y_1)\ast (\a_1(x_1)\cdot z_1)]\otimes [(\b_2(y_2)\cdot \a_2(x_2))\cdot \b_2(z_2)]\\
&=& (\beta(y)\ast \alpha(x))\cdot \beta(z)-\alpha\beta(y)\ast (\alpha(x)\cdot z).
\end{eqnarray*}
Now we prove (\ref{1.11}) in $A$. We compute:
\begin{eqnarray*}
(x\cdot y)\ast \beta (z)&=&((x_1\otimes x_2)\cdot (y_1\otimes y_2))\ast (\beta _1(z_1)\otimes \beta _2(z_2))\\
&=&(x_1\cdot y_1\otimes x_2\cdot y_2)\ast (\beta _1(z_1)\otimes \beta _2(z_2))\\
&=&(x_1\cdot y_1)\ast \beta _1(z_1)\otimes (x_2\cdot y_2)\cdot \beta _2(z_2)
+(x_1\cdot y_1)\cdot \beta _1(z_1)\otimes (x_2\cdot y_2)\ast \beta _2(z_2)\\
&\overset{(\ref{BHassoc})}{\underset{(\ref{1.11})}{=}}&
\alpha _1(x_1)\cdot (y_1\ast z_1)\otimes \alpha _2(x_2)\cdot (y_2\cdot z_2)
+\alpha _1(x_1)\cdot (y_1\cdot z_1)\otimes \alpha _2(x_2)\cdot (y_2\ast z_2)\\
&=&(\alpha _1(x_1)\otimes \alpha _2(x_2))\cdot (y_1\ast z_1\otimes y_2\cdot z_2+y_1\cdot z_1\otimes y_2\ast z_2)\\
&=&(\alpha _1(x_1)\otimes \alpha _2(x_2))\cdot ((y_1\otimes y_2)\ast (z_1\otimes z_2))
=\alpha (x)\cdot (y\ast z).
\end{eqnarray*}

From the above, it follows that  $A:=(A_1\otimes A_2, \cdot, \ast, \alpha_1 \otimes \alpha_2 ,
\beta_1\otimes \beta_2 )$ is a BiHom-Novikov-Poisson algebra.
\end{proof}

By taking in Theorem \ref{tensor} $\alpha_i =\beta_i =id_A$ for $i=1, 2$, we recover the following result, which is Theorem 4.1 in
\cite{Xu1}.

\begin{corollary}
 Let $(A_i, \cdot_i, \ast_i)$ be Novikov-Poisson algebras for $i=1, 2$. Then $A:=(A_1\otimes A_2, \cdot, \ast)$ is a Novikov-Poisson algebra defined as follows (for $x_i, y_i \in A_i$):
\begin{eqnarray*}
(x_1\otimes x_2)\cdot (y_1\otimes y_2)&=&(x_1\cdot_1 y_1)\otimes (x_2\cdot_2 y_2),\\
(x_1\otimes x_2)\ast (y_1\otimes y_2)&=&(x_1\ast_1 y_1)\otimes (x_2\cdot_2 y_2)+(x_1\cdot_1 y_1)\otimes (x_2\ast_2 y_2).
\end{eqnarray*}
\end{corollary}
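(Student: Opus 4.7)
The plan is simply to invoke \thref{tensor} in the degenerate case where all structure maps are identities. First I would observe that a Novikov-Poisson algebra $(A, \cdot, \ast)$ is precisely the same data as a BiHom-Novikov-Poisson algebra $(A, \cdot, \ast, \alpha, \beta)$ with $\alpha = \beta = id_A$: the multiplicativity conditions on $\alpha, \beta$ become trivial, the BiHom-commutativity condition \eqref{commu} reduces to ordinary commutativity of $\cdot$, the BiHom-associativity condition \eqref{BHassoc} reduces to ordinary associativity of $\cdot$, the BiHom-Novikov axioms \eqref{BiNoviko}--\eqref{Binovikov} collapse to the left pre-Lie identity and the right-symmetry condition defining a Novikov algebra, and finally the compatibilities \eqref{4.1}--\eqref{1.11} reduce exactly to \eqref{NP1} together with \eqref{NP3} (which is equivalent to \eqref{NP2} by commutativity of $\cdot$, as noted right after the definition of a Novikov-Poisson algebra).

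Next I would apply \thref{tensor} to the BiHom-Novikov-Poisson algebras $(A_i, \cdot_i, \ast_i, id_{A_i}, id_{A_i})$ for $i = 1, 2$. The theorem produces a BiHom-Novikov-Poisson structure on $A_1 \otimes A_2$ with structure maps $id_{A_1} \otimes id_{A_2}$ and $id_{A_1} \otimes id_{A_2}$, both of which equal $id_{A_1 \otimes A_2}$, and with multiplications $\cdot$ and $\ast$ given by exactly the formulas stated in the corollary. By the equivalence recorded in the previous paragraph, this BiHom-Novikov-Poisson algebra with identity structure maps is nothing but a Novikov-Poisson algebra, which is the desired conclusion. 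There is no genuine obstacle here: the entire content of the corollary is packaged inside \thref{tensor}, and the argument amounts to unwinding the definitions in the special case $\alpha_i = \beta_i = id_{A_i}$.
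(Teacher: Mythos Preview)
Your proposal is correct and follows exactly the paper's approach: the corollary is obtained from \thref{tensor} by setting $\alpha_i=\beta_i=id_{A_i}$ for $i=1,2$. Your additional remarks spelling out why a BiHom-Novikov-Poisson algebra with identity structure maps is the same thing as a Novikov-Poisson algebra are accurate and make the deduction fully explicit, but the underlying argument is identical to the paper's one-line justification.
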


By taking in Theorem \ref{tensor} $\alpha_i =\beta_i $ for $i=1, 2$, we recover the Hom-version of our result,
which is Theorem 3.1 in
\cite{yau3}.

\begin{corollary}
Let $(A_i, \cdot_i, \ast_i, \alpha_i )$ be Hom-Novikov-Poisson algebras for $i=1, 2$. Then $A:=(A_1\otimes A_2, \cdot, \ast, \alpha_1 \otimes \alpha_2 )$ is a Hom-Novikov-Poisson algebra defined as follows (for $x_i, y_i \in A_i$):
\begin{eqnarray*}
(x_1\otimes x_2)\cdot (y_1\otimes y_2)&=&(x_1\cdot_1 y_1)\otimes (x_2\cdot_2 y_2),\\
(x_1\otimes x_2)\ast (y_1\otimes y_2)&=&(x_1\ast_1 y_1)\otimes (x_2\cdot_2 y_2)+(x_1\cdot_1 y_1)\otimes (x_2\ast_2 y_2).
\end{eqnarray*}
\end{corollary}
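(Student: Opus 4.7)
The plan is to deduce this corollary as an immediate specialization of Theorem \ref{tensor}. First I would observe that every Hom-Novikov-Poisson algebra $(A,\cdot ,\ast ,\alpha )$ is canonically a BiHom-Novikov-Poisson algebra in the sense of Definition \ref{binovipoisson algebra}, namely $(A,\cdot ,\ast ,\alpha ,\alpha )$ with both structure maps taken equal to $\alpha $. This amounts to checking that the usual Hom-Novikov-Poisson axioms (Hom-commutativity $\alpha (a)\cdot b=\alpha (b)\cdot a$ up to adjustment, Hom-associativity, the two Hom-Novikov identities, and the two compatibility relations between $\cdot $ and $\ast $) coincide exactly with the BiHom identities (\ref{commu}), (\ref{BHassoc}), (\ref{BiNovik})--(\ref{Binovikov}) and (\ref{4.1})--(\ref{1.11}) under the substitution $\beta =\alpha $. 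Since in each BiHom axiom the maps $\alpha $ and $\beta $ appear only as twistings of individual arguments, the substitution produces precisely Yau's defining identities from \cite{yau3}, so the embedding of Hom-Novikov-Poisson algebras into BiHom-Novikov-Poisson algebras is automatic.

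Next I would apply Theorem \ref{tensor} to the two BiHom-Novikov-Poisson algebras $(A_i,\cdot _i,\ast _i,\alpha _i,\alpha _i)$, $i=1,2$, obtained via this embedding. Since the theorem is proved under no assumption on the relationship between the two structure maps, it applies verbatim and yields that
\[
\bigl(A_1\otimes A_2,\;\cdot ,\;\ast ,\;\alpha _1\otimes \alpha _2,\;\alpha _1\otimes \alpha _2\bigr)
\]
is a BiHom-Novikov-Poisson algebra, with $\cdot $ and $\ast $ given by the same formulas as in the corollary (which were already written in the form required by Yau).

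Finally, since the two structure maps of the resulting BiHom-Novikov-Poisson algebra coincide and both equal $\alpha _1\otimes \alpha _2$, the structure degenerates back to a Hom-Novikov-Poisson algebra with single twisting map $\alpha _1\otimes \alpha _2$, which is exactly the statement of the corollary. I expect no genuine obstacle: the content of the corollary is entirely carried by Theorem \ref{tensor}, and the only thing to verify is the routine observation that the Hom-case and the diagonal BiHom-case ($\beta =\alpha $) of Definition \ref{binovipoisson algebra} define the same class of algebras, so that restricting Theorem \ref{tensor} to the diagonal situation recovers Yau's result.
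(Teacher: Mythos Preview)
Your proposal is correct and follows exactly the paper's own approach: the corollary is stated immediately after Theorem \ref{tensor} with the one-line justification that it is obtained by taking $\alpha_i=\beta_i$ for $i=1,2$. Your additional remark that the Hom axioms coincide with the BiHom axioms under the substitution $\beta=\alpha$ is implicit in the paper but makes the specialization explicit.
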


By using Theorem \ref{tensor},  we can construct new BiHom-Novikov-Poisson algebras.
For example, the next result is obtained by first using Proposition 3.6 in \cite{lmmp4} and
then Theorem \ref{tensor}.
\begin{corollary}
Let $(A_i, \cdot_i, \ast_i, \alpha_i, \beta_i )$ be BiHom-Novikov-Poisson algebras for $i=1, 2$ and $A=A_1\otimes A_2$. For integers $n, m\geq 0$, define the linear maps $\a, \b: A\rightarrow A$ and $\cdot, \ast: A\ot A\rightarrow A$ by:
\begin{eqnarray*}
\a=\a_1^{n+1}\otimes \a_2^{m+1}&,& \b=\b_1^{n+1}\ot \b_2^{m+1},\\
(x_1\otimes x_2)\cdot (y_1\otimes y_2)&=&(\a_1^{n}(x_1)\cdot_1 \b_1^{n}(y_1))\otimes (\a_2^{m}(x_2)\cdot_2 \b_2^{m}(y_2)),\\
(x_1\otimes x_2)\ast (y_1\otimes y_2)&=&(\a_1^{n}(x_1)\ast_1 \b_1^{n}(y_1))\otimes (\a_2^{m}(x_2)\cdot_2 \b_2^{m}(y_2))\\
&&+(\a_1^{n}(x_1)\cdot_1 \b_1^{n}(y_1))\otimes (\a_2^{m}(x_2)\ast_2 \b_2^{m}(y_2)),
\end{eqnarray*}
for $x_i, y_i \in A_i$.  Then $(A, \cdot, \ast, \alpha,
\beta)$ is a BiHom-Novikov-Poisson algebra.
\end{corollary}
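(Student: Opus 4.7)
The plan is to combine Proposition 3.6 of \cite{lmmp4} with the tensor product construction of Theorem \ref{tensor}. Proposition 3.6 of \cite{lmmp4} is the BiHom-analogue of Yau's twisting principle for Novikov-Poisson algebras: given a BiHom-Novikov-Poisson algebra $(A,\mu,\ast,\alpha,\beta)$ and an integer $k\geq 0$, one obtains a new BiHom-Novikov-Poisson algebra $(A,\mu^{(k)},\ast^{(k)},\alpha^{k+1},\beta^{k+1})$, in which the twisted operations are defined by $\mu^{(k)}(x,y)=\alpha^{k}(x)\,\mu\,\beta^{k}(y)$ and $\ast^{(k)}(x,y)=\alpha^{k}(x)\ast\beta^{k}(y)$.

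First I would apply this result separately to each factor. Applied to $(A_1,\cdot_1,\ast_1,\alpha_1,\beta_1)$ with parameter $n$, it produces a BiHom-Novikov-Poisson algebra $(A_1,\tilde\cdot_1,\tilde\ast_1,\alpha_1^{n+1},\beta_1^{n+1})$ with $x_1\,\tilde\cdot_1\,y_1=\alpha_1^{n}(x_1)\cdot_1\beta_1^{n}(y_1)$ and $x_1\,\tilde\ast_1\,y_1=\alpha_1^{n}(x_1)\ast_1\beta_1^{n}(y_1)$. Similarly, applied to $(A_2,\cdot_2,\ast_2,\alpha_2,\beta_2)$ with parameter $m$, it gives a BiHom-Novikov-Poisson algebra $(A_2,\tilde\cdot_2,\tilde\ast_2,\alpha_2^{m+1},\beta_2^{m+1})$, where the two operations are defined by the analogous formulas.

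Next I would feed these two twisted structures into Theorem \ref{tensor}. The output is a BiHom-Novikov-Poisson algebra structure on $A_1\otimes A_2$ whose structure maps are exactly $\alpha_1^{n+1}\otimes\alpha_2^{m+1}$ and $\beta_1^{n+1}\otimes\beta_2^{m+1}$, whose commutative product is given by $(x_1\otimes x_2)\cdot(y_1\otimes y_2)=(x_1\,\tilde\cdot_1\,y_1)\otimes(x_2\,\tilde\cdot_2\,y_2)$, and whose Novikov product is given by $(x_1\otimes x_2)\ast(y_1\otimes y_2)=(x_1\,\tilde\ast_1\,y_1)\otimes(x_2\,\tilde\cdot_2\,y_2)+(x_1\,\tilde\cdot_1\,y_1)\otimes(x_2\,\tilde\ast_2\,y_2)$. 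Unfolding the definitions of $\tilde\cdot_i$ and $\tilde\ast_i$ then recovers precisely the operations $\cdot$ and $\ast$ stated in the corollary, finishing the argument.

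There is essentially no obstacle beyond this bookkeeping, since the substantive work has already been carried out in Theorem \ref{tensor} and Proposition 3.6 of \cite{lmmp4}. The only point that deserves attention is that the Yau-type deformation must be applied with independent exponents $n$ and $m$ on the two factors before tensoring; this is legitimate because the construction of Proposition 3.6 operates on each BiHom-Novikov-Poisson algebra separately and is insensitive to what is done on other algebras.
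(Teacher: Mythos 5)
Your proposal is correct and follows essentially the same route as the paper: twist each factor $(A_i,\cdot_i,\ast_i,\alpha_i,\beta_i)$ by Proposition 3.6 of \cite{lmmp4} with exponents $n$ and $m$ respectively, then apply Theorem \ref{tensor} to the two twisted algebras and unfold the definitions. Nothing is missing.
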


\begin{proof}
Indeed, by applying  Proposition 3.6 in \cite{lmmp4}  for $\tilde{\alpha }:=\alpha _1^{n}$ and
$\tilde{\beta }:=\beta _1^{n}$ and respectively for $\tilde{\alpha }:=\alpha _2^{m}$ and
$\tilde{\beta }:=\beta _2^{m}$,  we obtain that
\begin{eqnarray*}
 && A_1^{n}:=(A_1, \cdot_1 \circ (\alpha_1^{n}\otimes \beta_1^{n}), \ast_1 \circ (\alpha_1^{n}\otimes \beta_1^{n}), \alpha_1^{n+1}, \beta_1^{n+1}),\\
  && A_2^{m}:=(A_2, \cdot_2 \circ (\alpha_2^{m}\otimes \beta_2^{m}), \ast_2 \circ (\alpha_2^{m}\otimes \beta_2^{m}), \alpha_2^{m+1}, \beta_2^{m+1})
\end{eqnarray*}
are BiHom-Novikov-Poisson algebras. From Theorem \ref{tensor}, it follows that the tensor product $(A, \cdot, \ast, \alpha,
\beta)$ of $ A_1^{n}$ and $A_2^{m}$
 is also a BiHom-Novikov-Poisson algebra, where $A=A_1\otimes A_2$.
\end{proof}

\section{Perturbations of BiHom-Novikov-Poisson algebras}\label{sec3}
\setcounter{equation}{0}

The main aim of this section is to show that BiHom-Novikov-Poisson algebra structures are preserved under certain
''perturbations''.
We begin with a result of independent interest.
\begin{lemma}\label{lemma 3.1}
Let $(A, \mu , \alpha , \beta )$ be a BiHom-associative algebra and let $a\in A$ satisfying
\begin{eqnarray}
&&\a^2(a)=\b^2 (a)=a. \label{suplim}
\end{eqnarray}
Define a new operation on $A$ by
\begin{eqnarray}
\diamond : A\otimes A\rightarrow A, ~~x\diamond y=\alpha (x)(\alpha (a)y), \label{newdiamond}
\end{eqnarray}
for all $x, y\in A$. Then $A'=(A, \diamond , \alpha^2 , \beta^2 )$ is also a BiHom-associative algebra. If moreover
$(A, \mu , \alpha , \beta )$ is BiHom-commutative, then $A'=(A, \diamond , \alpha^2 , \beta^2 )$ is also
BiHom-commutative.
\end{lemma}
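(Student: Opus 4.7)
The plan is to verify each of the four axioms of a BiHom-associative algebra for the new quintuple $(A, \diamond, \alpha^2, \beta^2)$, and then, under the extra hypothesis, the BiHom-commutativity condition (\ref{commu}). The commutation $\alpha^2 \circ \beta^2 = \beta^2 \circ \alpha^2$ is immediate from $\alpha \circ \beta = \beta \circ \alpha$. The multiplicativity of $\alpha^2$ (and, symmetrically, of $\beta^2$) with respect to $\diamond$ reduces, after unfolding (\ref{newdiamond}) and using that $\alpha$ and $\beta$ are multiplicative with respect to $\mu$, to identifying $\alpha^3(a)$ with $\alpha(a)$ and $\alpha\beta^2(a)$ with $\alpha(a)$; both identifications follow directly from (\ref{suplim}).

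For the BiHom-associativity identity for $\diamond$, I would expand both $\alpha^2(x) \diamond (y \diamond z)$ and $(x \diamond y) \diamond \beta^2(z)$ via (\ref{newdiamond}). On the left, apply (\ref{BHassoc}) twice, first to the inner bracket $\alpha(a)(\alpha(y)(\alpha(a)z))$ and then to the resulting product against $\alpha^3(x)$, and simplify using $\beta^2(a)=a$. On the right, push $\alpha$ through the outer bracket by multiplicativity and then use $\alpha^2(a)=a$. Both sides should collapse to the common expression $(\alpha^2(x)(a\alpha(y)))(\alpha(a)\beta^2(z))$, which establishes BiHom-associativity.

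The BiHom-commutativity step is the main obstacle, because the definition $x \diamond y = \alpha(x)(\alpha(a)y)$ is visibly asymmetric in $x$ and $y$: the element $\alpha(a)$ sits next to the right factor rather than symmetrically between the two. My plan is a rewriting chain that uses (\ref{BHassoc}) and (\ref{commu}) to shuttle $\alpha(a)$ from one side to the other. Starting from $\beta^2(x) \diamond \alpha^2(y) = \alpha\beta^2(x)(\alpha(a)\alpha^2(y))$, apply (\ref{BHassoc}) to get $(\beta^2(x)\alpha(a))\alpha^2\beta(y)$; then use (\ref{commu}) in the form $\beta(\beta(x))\alpha(a)=\beta(a)\alpha\beta(x)$ to rewrite this as $(\beta(a)\alpha\beta(x))\alpha^2\beta(y)$. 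Apply (\ref{BHassoc}) backwards to reach $\alpha\beta(a)(\alpha\beta(x)\alpha^2(y))$, use (\ref{commu}) once more inside to exchange $\alpha\beta(x)\alpha^2(y)$ with $\alpha\beta(y)\alpha^2(x)$, and then reverse the previous three moves to arrive at $\beta^2(y) \diamond \alpha^2(x)$. The delicate point throughout is bookkeeping the $\alpha$ and $\beta$ exponents so that (\ref{suplim}) is invoked each time $\alpha^2(a)$ or $\beta^2(a)$ needs to be collapsed back to $a$.
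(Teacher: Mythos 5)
Your proposal is correct and follows essentially the same direct verification as the paper: multiplicativity via (\ref{suplim}), BiHom-associativity by two applications of (\ref{BHassoc}) plus collapsing $\alpha^2(a)$ and $\beta^2(a)$ to $a$, and BiHom-commutativity by shuttling $\alpha(a)$ across with (\ref{BHassoc}) and (\ref{commu}). The only cosmetic differences are that you meet both sides of the associativity identity at the common expression $(\alpha^2(x)(a\alpha(y)))(\alpha(a)\beta^2(z))$ rather than transforming one side into the other, and your commutativity chain pulls $\alpha\beta(a)$ out front before swapping $x$ and $y$, whereas the paper swaps the grouped factor $\beta(a\alpha(x))$ against $\alpha(\alpha\beta(y))$ directly; both routes check out.
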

\begin{proof} The fact that $\alpha ^2$ and $\beta ^2$ are multiplicative with respect to $\diamond $ follows by an easy
computation (using (\ref{suplim})) which is left to the reader. We prove now that
$(A, \diamond , \alpha^2 , \beta^2 )$ is BiHom-associative. For $x, y, z\in A$ we compute:
\begin{eqnarray*}
 (x\diamond y)\diamond \b^2(z)&=&(\alpha (x)(\alpha (a)y))\diamond \beta ^2(z)
=(\alpha ^2(x)(\alpha ^2(a)\alpha (y)))(\alpha (a)\beta ^2(z))\\
&\overset{(\ref{BHassoc})}{=}&((\alpha (x)\alpha ^2(a))\alpha \beta (y))(\alpha (a)\beta ^2(z))\\
&\overset{(\ref{suplim})}{=}&((\alpha (x)\alpha ^2(a))\alpha \beta (y))(\alpha \beta ^2(a)\beta ^2(z))\\
&=&((\alpha (x)\alpha ^2(a))\alpha \beta (y))\beta (\alpha \beta (a)\beta (z))\\
&\overset{(\ref{BHassoc}), (\ref{suplim})}{=}&(\alpha ^2(x)\alpha (a))(\alpha \beta (y)(\alpha \beta (a)\beta (z)))
=(\alpha ^2(x)\alpha (a))\beta (\alpha (y)(\alpha (a)z))\\
&\overset{(\ref{BHassoc})}{=}&\alpha ^3(x)(\alpha (a)(\alpha (y)(\alpha (a)z)))\\
&=&\alpha ^2(x)\diamond (\alpha (y)(\alpha (a)z))=\a^2(x)\diamond (y\diamond z).
\end{eqnarray*}

Now we assume that $(A, \mu , \alpha , \beta )$ is BiHom-commutative and we prove that
$(A, \diamond , \alpha^2 , \beta^2 )$ is also
BiHom-commutative:
\begin{eqnarray*}
\b^2(x)\diamond \a^2(y)&=& \alpha \beta ^2(x)(\alpha (a)\alpha ^2(y))
\overset{(\ref{BHassoc})}{=}(\beta ^2(x)\alpha (a))\alpha ^2\beta (y)\\
&\overset{(\ref{commu})}{=}& (\beta (a)\alpha \beta (x))\alpha ^2\beta (y)=\beta (a\alpha (x))\alpha (\alpha \beta (y))\\
&\overset{(\ref{commu})}{=}&\beta (\alpha \beta (y))\alpha (a\alpha (x))=\alpha (\beta ^2(y))(\alpha (a)\alpha ^2(x))
=\b^2(y)\diamond \a^2(x),
\end{eqnarray*}
finishing the proof.
\end{proof}

Let us see now what Lemma \ref{lemma 3.1} gives in the Hom-associative case.
Let $(A, \mu , \alpha )$ be a commutative Hom-associative algebra, i.e. $\alpha (x)(yz)=(xy)\alpha (z)$
and $xy=yx$, for all $x, y, z\in A$. Assume that $\alpha $ is multiplicative with respect to $\mu $
and let $a\in A$ such that $\alpha ^2(a)=a$. Then obviously $(A, \mu , \alpha , \alpha )$ is also a
BiHom-commutative algebra and we are in the hypotheses of Lemma \ref{lemma 3.1}; in this
particular case of Lemma \ref{lemma 3.1}, the multiplication (\ref{newdiamond}) may be rewritten as: 
\begin{eqnarray*}
x\diamond y&=&\alpha (x)(\alpha (a)y)=(x\alpha (a))\alpha (y)\\
&=&(\alpha (a)x)\alpha (y)=\alpha ^2(a)(xy)=a(xy).
\end{eqnarray*}
What we obtained, $x\diamond y=a(xy)$, is exactly formula (4.1.1)  in \cite{yau3}. Thus, Lemma \ref{lemma 3.1} is indeed a generalization of the
corresponding result (Lemma 4.1 in \cite{yau3}) for the multiplicative commutative Hom-associative case. However,
note that formula (4.1.1)  in \cite{yau3} gives a Hom-associative multiplication only in the commutative case;
for the noncommutative case, it is clear that the proper formula is (\ref{newdiamond}).

We can now use Lemma \ref{lemma 3.1} to prove the main result of this section.

\begin{theorem}
\label{theorem 1} Let $(A,\mu ,\ast ,\alpha ,\beta )$ be a
BiHom-Novikov-Poisson algebra and $a\in A$ with 
$\alpha ^{2}(a)=\beta ^{2}(a)=a$.
Then $A^{\prime }=(A,\diamond ,\ast _{\alpha , \beta },\alpha ^{2},\beta ^{2})$
is also a BiHom-Novikov-Poisson algebra, where
\begin{eqnarray*}
&&x\diamond y =\alpha (x)(\alpha (a)y), \;\;\;
x\ast _{\alpha , \beta }y =\alpha (x)\ast \beta (y), \;\;\; \forall \;x, y\in A.
\end{eqnarray*}
\end{theorem}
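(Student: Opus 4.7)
The plan is to verify the three conditions of Definition~\ref{binovipoisson algebra} for $A' = (A, \diamond, \ast_{\alpha, \beta}, \alpha^2, \beta^2)$.

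Condition~(1), namely that $(A, \diamond, \alpha^2, \beta^2)$ is a BiHom-commutative algebra, is exactly the conclusion of Lemma~\ref{lemma 3.1} applied to the BiHom-commutative algebra $(A, \mu, \alpha, \beta)$ and the element $a$, so this part is free. For condition~(2), that $(A, \ast_{\alpha, \beta}, \alpha^2, \beta^2)$ is a BiHom-Novikov algebra, I would note that $\ast_{\alpha, \beta} = \ast \circ (\alpha \otimes \beta)$ is a Yau-type twist of the original BiHom-Novikov operation by its own commuting structure maps. Multiplicativity of $\alpha^2$ and $\beta^2$ with respect to $\ast_{\alpha, \beta}$ is immediate from that of $\alpha, \beta$ with respect to $\ast$, and the identities (\ref{BiNoviko}) and (\ref{Binovikov}) for $\ast_{\alpha, \beta}$ reduce to the corresponding identities for $\ast$ after applying $\alpha$ and $\beta$ an appropriate number of times; this is a direct calculation in the spirit of Proposition~3.6 in \cite{lmmp4}.

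The heart of the proof lies in condition~(3), the three compatibilities (\ref{4.1}), (\ref{4.2}) and (\ref{1.11}) between $\diamond$ and $\ast_{\alpha, \beta}$. The uniform strategy for each is to unfold the definitions on both sides and then combine the tools available: BiHom-associativity (\ref{BHassoc}) of $\mu$, BiHom-commutativity (\ref{commu}), multiplicativity of $\alpha$ and $\beta$ with respect to both $\mu$ and $\ast$, the hypothesis $\alpha^2(a) = \beta^2(a) = a$ (which absorbs stray iterates $\alpha^k(a)$ and $\beta^k(a)$), and the original compatibility relations (\ref{4.1})--(\ref{1.11}). For instance, to establish (\ref{1.11}) for $A'$, one rewrites
\[
\alpha^2(x) \diamond (y \ast_{\alpha, \beta} z) = \alpha^3(x)\bigl(\alpha(a)(\alpha(y) \ast \beta(z))\bigr),
\]
then applies the original (\ref{1.11}) twice to pull $\alpha(a)$ and afterwards $\alpha^2(x)$ across the $\ast$-product, and finally uses $\alpha^2(a) = a$ to match $(x \diamond y) \ast_{\alpha, \beta} \beta^2(z)$. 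The verification of (\ref{4.2}) proceeds analogously: unfold, apply the original (\ref{4.2}) to the properly massaged left-hand side, and rebalance the powers of $\alpha, \beta$ on the factor $a$ using $\alpha^2(a) = \beta^2(a) = a$.

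The step I expect to be the main obstacle is (\ref{4.1}), since it asserts a symmetry in $x$ and $y$ for a difference of two expressions, both of which carry an extra factor of $\alpha(a)$ that must be threaded through in a symmetric fashion. After unfolding, I would apply the original (\ref{4.1}) to transfer the $x \leftrightarrow y$ symmetry onto the BiHom-Novikov-type differences, and then invoke BiHom-commutativity together with BiHom-associativity to move the $\alpha(a)$ factors into matching positions on both sides; the identities $\alpha^2(a) = \beta^2(a) = a$, together with the multiplicativity of $\alpha$ and $\beta$, will be used repeatedly to prevent the powers of the structure maps from proliferating. Once this calculation is carried out, combining it with the verifications of (\ref{4.2}) and (\ref{1.11}) completes condition~(3) and hence the proof.
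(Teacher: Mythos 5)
Your proposal follows the paper's proof exactly: condition (1) is obtained from Lemma \ref{lemma 3.1}, condition (2) from the Yau-twisting result in \cite{lmmp4} (the paper simply cites Corollary 3.9 there, the $n=1$ case, rather than reproving it), and condition (3) by unfolding the definitions and combining (\ref{BHassoc}), (\ref{commu}), the original compatibilities (\ref{4.1})--(\ref{1.11}) and the absorption rule $\alpha^{2}(a)=\beta^{2}(a)=a$. The only caveat is that your verifications of (\ref{4.1}) and (\ref{4.2}) remain outlines rather than completed computations, but the strategy you describe --- and in particular your explicit route for (\ref{1.11}), pulling first $\alpha(a)$ and then $\alpha^{2}(x)$ across the $\ast$-product via the original (\ref{1.11}) --- is precisely the one carried out in the paper.
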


\begin{proof}
By Lemma \ref{lemma 3.1} we know that $(A,\diamond ,\alpha ^{2},\beta ^{2})$
is a BiHom-commutative algebra. By Corollary 3.9 in \cite{lmmp4} we get that
$(A,\ast _{\alpha , \beta },\alpha ^{2},\beta ^{2})$ is a BiHom-Novikov algebra
(the $n=1$ case). It remains to prove the compatibility conditions for $A'$. Let
$x,y,z\in A$, we prove (\ref{4.2}):\\
${\;\;\;}$
$(x\diamond \beta ^{2}(y))\ast _{\alpha , \beta }\alpha ^{2}\beta ^{2}(z)$
\begin{eqnarray*}
&=&(\alpha (x)(\alpha (a)\beta ^{2}(y)))\ast _{\alpha , \beta }\alpha
^{2}\beta ^{2}(z)
=\alpha ( \alpha (x)(\alpha (a)\beta ^{2}(y)))\ast \beta \alpha
^{2}\beta ^{2}(z) \\
&\overset{(\ref{suplim})}{=}& ( \alpha ^2 (x)
(a\alpha \beta ^{2}(y))) \ast \alpha \beta (\alpha \beta
^{2}(z))
\overset{(\ref{suplim})}{=}(\alpha ^{2}\left( x\right) \beta
(\beta \left( a\right) \alpha \beta (y)) )\ast \alpha \beta
(\alpha \beta ^{2}\left( z\right)) \\
&\overset{(\ref{4.2})}{=}&(\alpha ^{2}\left( x\right) \ast \beta
(\alpha \beta ^{2}\left( z\right) )) \alpha \beta
(\beta \left( a\right)  \alpha \beta (y))
=(\alpha ^{2}\left( x\right) \ast \alpha \beta ^{3}\left( z\right))
\left( \alpha \beta ^{2}\left( a\right)  \alpha ^{2}\beta
^{2}(y)\right) \\
&\overset{(\ref{suplim})}{=}&\alpha \left( \alpha \left( x\right)
\ast \beta ^{3}\left( z\right) \right)\left( \alpha \left(
a\right) \alpha ^{2}\beta ^{2}(y)\right)
=\left( \alpha \left( x\right) \ast \beta ^{3}\left( z\right) \right)
\diamond \alpha ^{2}\beta ^{2}(y) \\
&=&(x\ast _{\alpha , \beta }\beta ^{2}(z))\diamond \alpha ^{2}\beta ^{2}(y).
\end{eqnarray*}%

To prove the compatibility condition (\ref{4.1}) for $A^{\prime }$, we
compute:\\[2mm]
${\;\;\;\;\;}$
$(\beta ^{2}(x)\ast _{\alpha , \beta }\alpha ^{2}(y))\diamond \beta
^{2}(z)-\alpha ^{2}\beta ^{2}(x)\ast _{\alpha , \beta }(\alpha ^{2}(y)\diamond
z)$
\begin{eqnarray*}
&=& \left( \alpha \beta ^{2}(x)\ast \beta \alpha ^{2}(y)\right)
\diamond \beta ^{2}(z)-\alpha ^{3}\beta ^{2}(x)\ast \beta \left( \alpha
^{2}(y)\diamond z\right)  \\
&=&\alpha \left( \alpha \beta ^{2}(x)\ast \beta \alpha ^{2}(y)\right) \left(
\alpha \left( a\right) \beta ^{2}(z)\right) -\alpha ^{3}\beta ^{2}(x)\ast
\beta \left( \alpha ^{3}(y)\left( \alpha \left( a\right)  z\right)\right)  \\
&=&\left( \alpha ^{2}\beta ^{2}(x)\ast \alpha \beta \alpha ^{2}(y)\right)
\left( \alpha \left( a\right) \beta ^{2}(z)\right) -\alpha ^{3}\beta
^{2}(x)\ast \left( \beta \alpha ^{3}(y)\left( \beta \alpha \left( a\right)
 \beta \left( z\right)\right) \right)  \\
&\overset{(\ref{suplim})}{=}&(\beta (\alpha ^{2}\beta (x)) \ast
\alpha ( \beta \alpha ^{2}(y))) \beta (\alpha \beta \left(
a\right) \beta (z)) \\
&&-\alpha \beta (\alpha ^{2}\beta (x)
)\ast (\alpha (\beta \alpha ^{2}(y) )(\alpha \beta
\left( a\right) \beta (z))) \\
&\overset{(\ref{4.1})}{=}&(\beta (\beta \alpha ^{2}(y)) \ast
\alpha ( \alpha ^{2}\beta (x)) )\beta ( \alpha \beta \left(
a\right) \beta (z)) \\
&&-\alpha \beta (\beta \alpha ^{2}(y))
\ast (\alpha (\alpha ^{2}\beta (x)) ( \alpha \beta \left(
a\right) \beta (z)) ) \\
&\overset{(\ref{suplim})}{=}&\left( \alpha ^{2}\beta ^{2}(y)\ast \beta
\alpha ^{3}(x)\right) \left( \alpha \left( a\right) \beta ^{2}(z)\right)
-\alpha ^{3}\beta ^{2}(y)\ast \left( \alpha ^{3}\beta (x)\left( \beta \alpha
\left( a\right) \beta \left( z\right) \right) \right)  \\
&=&\alpha \left( \alpha \beta ^{2}(y)\ast \beta \alpha ^{2}(x)\right) \left(
\alpha \left( a\right) \beta ^{2}(z)\right) -\alpha ^{3}\beta ^{2}(y)\ast
\beta \left( \alpha ^{3}(x)\left( \alpha \left( a\right) z\right) \right)  \\
&=&\left( \alpha \beta ^{2}(y)\ast \beta \alpha ^{2}(x)\right) \diamond
\beta ^{2}(z)-\alpha ^{3}\beta ^{2}(y)\ast \beta (\alpha ^{2}(x)\diamond z)
\\
&=&(\beta ^{2}(y)\ast _{\alpha , \beta }\alpha ^{2}(x))\diamond \beta
^{2}(z)-\alpha ^{2}\beta ^{2}(y)\ast _{\alpha , \beta }(\alpha ^{2}(x)\diamond
z).
\end{eqnarray*}%
Finally, we prove (\ref{1.11}):
\begin{eqnarray*}
\alpha ^{2}(x)\diamond (y\ast _{\alpha , \beta }z) &=&\alpha ^{2}(x)\diamond
\left( \alpha \left( y\right) \ast \beta \left( z\right) \right)
=\alpha (\alpha ^{2}(x)) (\alpha (a)( \alpha \left(
y\right) \ast \beta \left( z\right) ) ) \\
&\overset{(\ref{BHassoc})}{=}&
(\alpha ^{2}(x) \alpha (a))\beta (\alpha
\left( y\right) \ast \beta \left( z\right)) =\alpha (\alpha
\left( x\right) a) (\beta \alpha \left( y\right) \ast \beta
^{2}\left( z\right) )  \\
&\overset{\left( \ref{1.11}\right) }{=}&\left( (\alpha \left( x\right)
a) \beta \alpha \left( y\right) \right) \ast \beta ^{3}\left( z\right)
\overset{\left( \ref{1.11}\right) }{=}
( \alpha ^{2}(x)\left( a\alpha \left( y\right) \right)) \ast
\beta ^{3}\left( z\right)  \\
&\overset{(\ref{suplim})}{=}&(\alpha ^{2}(x)(\alpha ^{2}(a)\alpha
\left( y\right) ))\ast \beta ^{3}\left( z\right)
=\alpha (\alpha (x)(\alpha (a)y)) \ast \beta ^{3}\left(
z\right) \\
&=&\alpha \left( x\diamond y\right) \ast \beta ^{3}\left( z\right)
=(x\diamond y)\ast _{\alpha ,\beta }\beta ^{2}(z),
\end{eqnarray*}
finishing the proof.
\end{proof}

The next result is the Hom-version of Theorem \ref{theorem 1} with $\a=\b$, which is Theorem 4.2 in
\cite{yau3}.
\begin{corollary}\label{Coro 3.3}
 Let $(A, \mu , \ast, \alpha)$ be a multiplicative Hom-Novikov-Poisson algebra and $a\in A$ an element satisfying $\a^2(a)=a$.
Then $A'=(A, \diamond, \ast_{\a}, \alpha^2)$ is also a multiplicative Hom-Novikov-Poisson algebra, where
$x\diamond y=a(xy)$ and
$x\ast_{\a}y =\a(x)\ast \a (y)$,
for all $x, y\in A$.
\end{corollary}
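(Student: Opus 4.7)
The plan is to deduce this as a direct specialization of Theorem \ref{theorem 1} (our Theorem 3.2 for the BiHom case) to the situation where the two structure maps coincide. First I would observe that any multiplicative Hom-Novikov-Poisson algebra $(A, \mu, \ast, \alpha)$ is automatically a BiHom-Novikov-Poisson algebra in the sense of Definition \ref{binovipoisson algebra} by taking $\beta := \alpha$; indeed, the BiHom-commutativity condition $\beta(a)\alpha(b) = \beta(b)\alpha(a)$ becomes $\alpha(a)\alpha(b) = \alpha(b)\alpha(a)$, which follows from ordinary commutativity, and the BiHom-Novikov axioms together with the compatibility conditions (\ref{4.1})--(\ref{1.11}) degenerate precisely to their Hom-analogues when $\alpha = \beta$.

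Next I would apply Theorem \ref{theorem 1} with $\beta = \alpha$ and the given element $a$ satisfying $\alpha^2(a) = a$ (which simultaneously gives $\beta^2(a) = a$). This yields that $(A, \diamond', \ast_{\alpha,\alpha}, \alpha^2, \alpha^2)$ is a BiHom-Novikov-Poisson algebra, where $x \diamond' y = \alpha(x)(\alpha(a) y)$ and $x \ast_{\alpha,\alpha} y = \alpha(x) \ast \alpha(y) = x \ast_\alpha y$. Since the two structure maps agree, this BiHom-Novikov-Poisson structure is in fact a Hom-Novikov-Poisson structure with structure map $\alpha^2$, and multiplicativity of $\alpha^2$ with respect to $\diamond'$ and $\ast_\alpha$ is inherited from Theorem \ref{theorem 1}.

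It then remains only to identify $\diamond'$ with the multiplication $\diamond$ as defined in the statement. This identification was already established in the discussion following Lemma \ref{lemma 3.1}: using Hom-associativity $\alpha(x)(yz) = (xy)\alpha(z)$, commutativity, and the hypothesis $\alpha^2(a) = a$, one computes
\begin{eqnarray*}
x \diamond' y = \alpha(x)(\alpha(a) y) = (x \alpha(a))\alpha(y) = (\alpha(a) x)\alpha(y) = \alpha^2(a)(xy) = a(xy) = x \diamond y.
\end{eqnarray*}
Thus the two BiHom-Novikov-Poisson structures coincide, and the result follows.

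The main conceptual point — really the only nontrivial step — is recognizing that the noncommutative-looking formula (\ref{newdiamond}) from Lemma \ref{lemma 3.1} collapses to Yau's formula $x \diamond y = a(xy)$ under the commutativity assumption; this is the observation that drives the whole reduction and has already been carried out in the remarks following Lemma \ref{lemma 3.1}, so no further computation is needed.
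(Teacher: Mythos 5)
Your proposal is correct and follows exactly the route the paper intends: the corollary is stated there as the $\alpha=\beta$ specialization of Theorem \ref{theorem 1}, and the identification $\alpha(x)(\alpha(a)y)=a(xy)$ in the commutative Hom-associative case is precisely the computation the paper carries out in the remarks following Lemma \ref{lemma 3.1}. Nothing is missing.
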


The next result is similar to Theorem \ref{theorem 1}; it uses, as in  Theorem \ref{theorem 1}, a suitable element in a
 BiHom-Novikov-Poisson algebra, but this time to perturb the other multiplication.
\begin{theorem}\label{theorem 2}
 Let $(A, \mu , \ast, \alpha , \beta )$ be a BiHom-Novikov-Poisson algebra and $a\in A$ with $\a^2(a)=\b^2 (a)=a$.
Then $\overline{A}=(A, \cdot_{\a , \b}, \times, \alpha^2, \beta^2)$ is also a BiHom-Novikov-Poisson algebra, where
\begin{eqnarray*}
&&x\cdot_{\a , \b} y = \a(x) \b(y),\;\;\;
x\times y = \a(x)\ast \b(y)+ \alpha (x)(\alpha (a)y), \;\;\; \forall \;x, y\in A.
\end{eqnarray*}
\end{theorem}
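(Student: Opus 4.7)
The plan is to decompose $\times = \ast_{\alpha, \beta} + \diamond$, where $\diamond$ is the perturbation of $\mu$ from Lemma~\ref{lemma 3.1}, and then to combine two BiHom-Novikov-Poisson structures on $A$ with structure maps $(\alpha^2, \beta^2)$ that we already have at our disposal: the Yau twist $(A, \cdot_{\alpha, \beta}, \ast_{\alpha, \beta}, \alpha^2, \beta^2)$ (coming from Proposition~3.6 of \cite{lmmp4}, as invoked after Theorem~\ref{tensor}) and the perturbation $(A, \diamond, \ast_{\alpha, \beta}, \alpha^2, \beta^2)$ supplied by Theorem~\ref{theorem 1}. These two inputs immediately deliver the multiplicativity of $\alpha^2$ and $\beta^2$ with respect to each operation, the BiHom-commutativity of $\cdot_{\alpha, \beta}$, the BiHom-Novikov axioms for $\ast_{\alpha, \beta}$, and the full list of compatibilities (\ref{4.1})--(\ref{1.11}) between $\ast_{\alpha, \beta}$ and each of $\cdot_{\alpha, \beta}$ and $\diamond$ separately. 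In addition, Lemma~\ref{lemma 3.1} tells us that $\diamond$ itself is BiHom-associative and BiHom-commutative with structure maps $(\alpha^2, \beta^2)$.

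For the BiHom-Novikov axioms (\ref{BiNoviko}) and (\ref{Binovikov}) of $\times$, I would expand bilinearly. The pure $\ast_{\alpha, \beta}$ contributions are handled by the Yau twist; the pure $\diamond$ contributions collapse, namely (\ref{BiNoviko}) reduces to $0$ by BiHom-associativity of $\diamond$, and (\ref{Binovikov}) becomes symmetric in $y, z$ by associativity combined with commutativity of $\diamond$. The remaining cross terms are controlled precisely by the pair $(\diamond, \ast_{\alpha, \beta})$ compatibilities obtained from Theorem~\ref{theorem 1}: in (\ref{BiNoviko}), one cross bracket is symmetric in $x, y$ by (\ref{4.1}) applied to this pair and the other vanishes identically by (\ref{1.11}); in (\ref{Binovikov}), the two cross terms swap under $y \leftrightarrow z$ by (\ref{4.2}). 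Each of the compatibilities (\ref{4.1})--(\ref{1.11}) for the pair $(\cdot_{\alpha, \beta}, \times)$ is linear in the Novikov slot, so it splits as ``$\ast_{\alpha, \beta}$-part plus $\diamond$-part''. The $\ast_{\alpha, \beta}$-parts are again given by the Yau twist, and the $\diamond$-parts reduce to checking that the auxiliary pair $(\cdot_{\alpha, \beta}, \diamond)$ on its own satisfies (\ref{4.1})--(\ref{1.11}); I would verify this by direct computation using (\ref{BHassoc}), (\ref{commu}), (\ref{suplim}) and $\alpha \beta = \beta \alpha$ to move $\alpha(a)$ through the products, after which both sides of each identity collapse to a common expression of the shape $\alpha^3\beta^2(x) \cdot ((a \cdot \alpha^3(y)) \cdot \beta^2(z))$ or its analogue.

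The main obstacle is organisational rather than conceptual: the quadratic-in-$\times$ axioms (\ref{BiNoviko}) and (\ref{Binovikov}) produce up to four species of terms each, and one must identify precisely which of (\ref{4.1}), (\ref{4.2}), (\ref{1.11})---and for which of the pairs $(\diamond, \ast_{\alpha, \beta})$ from Theorem~\ref{theorem 1} or the new auxiliary $(\cdot_{\alpha, \beta}, \diamond)$---controls each cross bracket. Once this matching is set up, the remaining verifications reduce to repeated applications of (\ref{BHassoc}), (\ref{commu}) and (\ref{suplim}), entirely in the same spirit as the computations already carried out in the proofs of Lemma~\ref{lemma 3.1} and Theorem~\ref{theorem 1}.
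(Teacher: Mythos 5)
Your plan is correct, and every piece of the matching you set up is the right one; the difference from the paper is architectural rather than computational. The paper proves Theorem \ref{theorem 2} via Lemmas \ref{lemma 3.3} and \ref{lemma 3.2}, which perform the same bilinear expansion of $\times=\ast_{\alpha,\beta}+\diamond$ but then verify each resulting identity from scratch against the axioms of the original algebra $(A,\mu,\ast,\alpha,\beta)$. Your version is more modular: it recognises that the cross terms are literally instances of the compatibilities (\ref{4.1}), (\ref{4.2}), (\ref{1.11}) for the pair $(\diamond,\ast_{\alpha,\beta})$, which Theorem \ref{theorem 1} has already established, and that the pure $\ast_{\alpha,\beta}$ terms are instances of the axioms of the Yau twist $(A,\cdot_{\alpha,\beta},\ast_{\alpha,\beta},\alpha^2,\beta^2)$. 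Concretely, in the paper's notation the identity $v(x,y,z)=s(x,z,y)$ that Lemma \ref{lemma 3.3} re-derives for (\ref{Binovikov}) is exactly the instance of (\ref{4.2}) for $A'$ proved inside Theorem \ref{theorem 1}, and likewise $u-u'=0$ and the symmetry of $v-v'$ in the (\ref{BiNoviko}) computation are (\ref{1.11}) and (\ref{4.1}) for $A'$; your route eliminates this duplication. What you leave to hand computation is correctly identified: the pure $\diamond$ parts of the Novikov axioms, which do follow formally from the BiHom-associativity and BiHom-commutativity of $(A,\diamond,\alpha^2,\beta^2)$ supplied by Lemma \ref{lemma 3.1}, and the three compatibilities for the auxiliary pair $(\cdot_{\alpha,\beta},\diamond)$. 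For the latter your claim checks out: for instance, for (\ref{4.1}) both $(\beta^2(x)\diamond\alpha^2(y))\cdot_{\alpha,\beta}\beta^2(z)$ and $\alpha^2\beta^2(x)\diamond(\alpha^2(y)\cdot_{\alpha,\beta}z)$ collapse via (\ref{BHassoc}) and (\ref{suplim}) to $\alpha^3\beta^2(x)\bigl((a\,\alpha^3(y))\beta^2(z)\bigr)$, so their difference is identically zero, while (\ref{4.2}) additionally needs one application of (\ref{commu}). Once those short verifications are written out the proof is complete; the net effect is a shorter and better-structured argument than the paper's, at the price of making Theorem \ref{theorem 2} formally depend on Theorem \ref{theorem 1}.
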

\begin{proof}
 By Corollary 3.8 in \cite{lmmp4}, we get that $(A, \cdot_{\a , \b}, \alpha^2, \beta^2)$ is a BiHom-commutative algebra
(the $n=1$ case). We show that $(A, \times, \alpha^2, \beta^2)$ is a BiHom-Novikov algebra in Lemma \ref{lemma 3.3} below.
The relations (\ref{4.1}), (\ref{4.2}) and (\ref{1.11})
for $\overline{A}$ are proved in Lemma \ref{lemma 3.2} below.
\end{proof}

\begin{lemma} \label{lemma 3.3}
In the hypotheses of Theorem \ref{theorem 2},
$(A, \times, \alpha^2, \beta^2)$ is a BiHom-Novikov algebra.
\end{lemma}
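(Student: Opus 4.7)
The plan is first to check that $\alpha^{2}$ and $\beta^{2}$ are multiplicative with respect to $\times$. This is immediate from the multiplicativity of $\alpha$ and $\beta$ with respect to both $\mu$ and $\ast$ in the base algebra, using $\alpha^{2}(a) = \beta^{2}(a) = a$ to pass these maps through the factor involving $a$ inside the $\diamond$-summand.

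To verify the BiHom-Novikov identities (\ref{BiNoviko}) and (\ref{Binovikov}) for $\times$ relative to $\alpha^{2}, \beta^{2}$, I would write $\times = P + Q$, where $P(x,y) = x \ast_{\alpha,\beta} y = \alpha(x)\ast\beta(y)$ and $Q(x,y) = x \diamond y = \alpha(x)(\alpha(a)y)$, and expand both sides by bilinearity. Each side then splits into four summands labeled by the choice of $P$ or $Q$ at each of the two multiplications. By Corollary~3.9 in \cite{lmmp4} (the $n=1$ case), $(A, \ast_{\alpha,\beta}, \alpha^{2}, \beta^{2})$ is already a BiHom-Novikov algebra, so the pure $PP$ summands match correctly between the two sides. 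By Lemma~\ref{lemma 3.1}, $(A, \diamond, \alpha^{2}, \beta^{2})$ is BiHom-associative and BiHom-commutative; any such algebra trivially satisfies the BiHom-Novikov axioms, since (\ref{BiNoviko}) reduces to $0 = 0$ by BiHom-associativity, and (\ref{Binovikov}) follows by rewriting $(x \diamond \beta^{2}(y)) \diamond \alpha^{2}\beta^{2}(z) = \alpha^{2}(x) \diamond (\beta^{2}(y) \diamond \alpha^{2}(z))$ via BiHom-associativity and then swapping $\beta^{2}(y)$ with $\alpha^{2}(z)$ using BiHom-commutativity. Hence the pure $QQ$ summands also match.

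The task therefore reduces to two cross-compatibility identities between $P$ and $Q$. The first is a (\ref{4.1})-type statement asserting that
\[
(\beta^{2}(x) \ast_{\alpha,\beta} \alpha^{2}(y)) \diamond \beta^{2}(z) - \alpha^{2}\beta^{2}(x) \ast_{\alpha,\beta} (\alpha^{2}(y) \diamond z)
\]
is symmetric in $x,y$, together with its partner in which the roles of $\ast_{\alpha,\beta}$ and $\diamond$ are swapped; the second is a (\ref{4.2})-type statement asserting that
\[
(x \ast_{\alpha,\beta} \beta^{2}(y)) \diamond \alpha^{2}\beta^{2}(z) = (x \diamond \beta^{2}(z)) \ast_{\alpha,\beta} \alpha^{2}\beta^{2}(y),
\]
again together with its symmetric partner. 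Each cross identity I would verify by unfolding the definitions of $P$ and $Q$, expressing everything in terms of $\mu$ and $\ast$ on the base algebra, then using $\alpha^{2}(a) = \beta^{2}(a) = a$ together with multiplicativity to reposition the factor involving $a$, and finally invoking one of (\ref{4.1}), (\ref{4.2}), (\ref{1.11}) in combination with (\ref{BHassoc}) and (\ref{commu}).

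The main obstacle will be the bookkeeping in the cross-(\ref{BiNoviko}) identity: the three iterated operations produce long expressions, and arranging the powers of $\alpha, \beta$ to exactly match the left-hand side of (\ref{4.1}) requires a careful sequence of rewrites using (\ref{BHassoc}), (\ref{commu}), and the absorption rule $\alpha^{2}(a)=\beta^{2}(a)=a$, essentially mirroring the chain of manipulations that appears in the proof of Theorem~\ref{theorem 1}.
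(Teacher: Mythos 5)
Your proposal is correct and follows essentially the same route as the paper: expand $\times=\ast_{\alpha,\beta}+\diamond$ bilinearly, dispose of the pure $\ast_{\alpha,\beta}$ terms via Corollary 3.9 of \cite{lmmp4} and the pure $\diamond$ terms via Lemma \ref{lemma 3.1}, and reduce the remaining cross terms to exactly the mixed identities you list, proved from (\ref{4.1}), (\ref{4.2}), (\ref{1.11}), (\ref{BHassoc}) and (\ref{commu}) (the paper in fact shows the $(\diamond,\ast_{\alpha,\beta})$-version of the swapped (\ref{4.1})-partner vanishes outright via (\ref{1.11})). Your explicit observation that a BiHom-commutative BiHom-associative algebra is automatically BiHom-Novikov is a clean packaging of computations the paper carries out inline, but it is not a different argument.
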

\begin{proof}
We prove first  condition (\ref{BiNovik}):
\begin{eqnarray*}
\a^2(x\times y)&=& \a^2(\a(x)\ast \b(y)+ \alpha (x)(\alpha (a) y))
\overset{(\ref{suplim})}{=}\a^3 (x)\ast \a^2\b(y)+ \a^3(x)  (\alpha (a) \a^2(y))\\
&=& \a(\a^2 (x))\ast \b(\a^2(y))+ \alpha (\alpha ^2(x))  (\alpha (a) \a^2(y))=\a^2(x)\times \a^2(y).
\end{eqnarray*}
Similarly one can prove that $\b^2(x\times y)=\b^2(x)\times \b^2(y)$.

To check  condition (\ref{Binovikov}), we compute:
\begin{eqnarray*}
(x\times \b^2(y))\times \a^2\b^2(z)
&=&(\alpha (x)\ast \beta ^3(y)+\alpha (x)(\alpha (a)\beta ^2(y)))\times \alpha ^2\beta ^2(z)\\
&\overset{(\ref{suplim})}{=}&
\underbrace{(\a^2(x)\ast \a\b^3(y))\ast \a ^2\b^3(z)}_{u}+
\underbrace{(\a^2(x)(a \a\b^2(y)))\ast \a^2\b^3(z)}_{v(x, y, z)}\\
&& + \underbrace{(\a^2(x)\ast \a\b^3(y))(\alpha (a)\a^2\b^2(z))}_{s(x, y, z)}\\
&&+
\underbrace{(\alpha ^2(x)(a\a\b^2(y)))(\alpha (a)\a^2\b^2(z))}_{t}.
\end{eqnarray*}
The term $u=(\a^2(x)\ast \b(\a\b^2(y)))\ast \a\b(\a\b^2(z))$ is symmetric in $y$ and $z$ by (\ref{Binovikov}) in
$(A, \ast, \alpha , \beta )$.

Now we compute:
\begin{eqnarray*}
v(x, y, z)&\overset{(\ref{suplim})}{=}&
(\a^2(x) (\b^2(a)\alpha \beta ^2(y)))\ast \a^2\b^3(z)
=(\a^2(x)\beta (\beta (a)\alpha \beta (y)))\ast \alpha \beta (\alpha \beta ^2(z))\\
&\overset{(\ref{4.2})}{=}& (\a^2(x)\ast \a\b^3(z))\alpha \beta (\beta (a)\alpha \beta (y))\\
&\overset{(\ref{suplim})}{=}&(\a^2(x)\ast \a\b^3(z))(\alpha (a)\alpha ^2 \beta ^2(y))
=s(x, z, y).
\end{eqnarray*}
Now we prove that $t$ is symmetric in $y$ and $z$:
\begin{eqnarray*}
t&\overset{(\ref{suplim})}{=}&(\alpha ^2(x)(a\a\b^2(y)))(\alpha \beta ^2(a)\a^2\b^2(z))
=(\alpha ^2(x)(a\a\b^2(y)))\beta (\alpha \beta (a)\a^2\b(z))\\
&\overset{(\ref{BHassoc})}{=}& \alpha ^3(x)((a\alpha \beta ^2(y))(\alpha \beta (a)\alpha ^2\beta (z)))
\overset{(\ref{suplim})}{=}\alpha ^3(x)((\beta ^2(a)\alpha \beta ^2(y))(\alpha \beta (a)\alpha ^2\beta (z)))\\
&=& \alpha ^3(x)(\beta (\beta (a)\alpha \beta (y))\alpha (\beta (a)\alpha \beta (z)))
\overset{(\ref{commu})}{=} \alpha ^3(x)(\beta (\beta (a)\alpha \beta (z))\alpha (\beta (a)\alpha \beta (y)))\\
&=&\alpha ^3(x)((\beta ^2(a)\alpha \beta ^2(z))(\alpha \beta (a)\alpha ^2\beta (y))),
\end{eqnarray*}
and the last expression coincides with the one obtained three steps before it with $y$ and $z$ interchanged,
proving that indeed $t$ is symmetric in $y$ and $z$. \\
From the above we get that $(x\times \b^2(y))\times \a^2\b^2(z)$ is symmetric in $y$ and $z$,
proving (\ref{Binovikov}).\\
Now we prove (\ref{BiNoviko}). We need to check that the following expression is symmetric in $x$ and $y$:\\
${\;\;\;}$
$(\b^2(x)\times \a^2(y))\times \b^2(z)- \a^2 \b^2(x)\times(\a^2(y)\times z)$
\begin{eqnarray*}
&=& (\a\b^2(x)\ast \a^2\b(y))\times \b^2(z) + (\a\b^2(x)(\alpha (a)\a^2(y)))\times \b^2(z)\\
&& - \a^2 \b^2(x)\times(\a^3(y)\ast \b(z))- \a^2 \b^2(x)\times (\alpha ^3(y)(\alpha (a)z))\\
&\overset{(\ref{suplim})}{=}&(\alpha ^2\beta ^2(x)\ast \alpha ^3\beta (y))\ast \beta ^3(z)+
(\alpha ^2\beta ^2(x)\ast \alpha ^3\beta (y))(\alpha (a)\beta ^2(z))\\
&&+(\alpha ^2\beta ^2(x)(a\alpha ^3(y)))\ast \beta ^3(z)+(\alpha ^2\beta ^2(x)(a\alpha ^3(y)))(\alpha (a)\beta ^2(z))\\
&&-\alpha ^3\beta ^2(x)\ast (\alpha ^3\beta (y)\ast \beta ^2(z))-\alpha ^3\beta ^2(x)(\alpha (a)(\alpha ^3(y)
\ast \beta (z)))\\
&&-\alpha ^3\beta ^2(x)\ast (\alpha ^3\beta (y)(\alpha \beta (a)\beta (z)))-
\alpha ^3\beta ^2(x)(\alpha (a)(\alpha ^3(y)(\alpha (a)z))).
\end{eqnarray*}
We denote the eight terms in this expression respectively by $t$, $v$, $u$, $w$, $t'$, $u'$,
$v'$, $w'$, so the expression reads
\begin{eqnarray*}
&t + v+ u+ w - t'- u'- v'- w'.
\end{eqnarray*}
We prove that $t-t'$ is symmetric in $x$ and $y$ and $w-w'=0=u-u'$.
We compute:
\begin{eqnarray*}
t-t'&=& (\a^2\b^2(x)\ast \a^3\b(y))\ast \b^3(z)- \a^3\b^2(x)\ast (\a^3\b(y)\ast \b^2(z))\\
&=& (\b(\a^2\b(x))\ast \a(\a^2\b(y)))\ast \b(\b^2(z))- \a\b(\a^2\b(x))\ast (\a(\a^2\b(y))\ast \b^2(z))\\
&\overset{(\ref{BiNoviko})}{=}& (\b(\a^2\b(y))\ast \a(\a^2\b(x)))\ast \b(\b^2(z))- \a\b(\a^2\b(y))\ast (\a(\a^2\b(x))\ast \b^2(z))\\
&=& (\a^2\b^2(y)\ast \a^3\b(x))\ast \b^3(z)- \a^3\b^2(y)\ast (\a^3\b(x)\ast \b^2(z)).
\end{eqnarray*}
So clearly $t-t'$ is symmetric in $x$ and $y$.
Next we compute:
\begin{eqnarray*}
w-w'&=&(\alpha ^2\beta ^2(x)(a\alpha ^3(y)))(\alpha (a)\beta ^2(z))-
\alpha ^3\beta ^2(x)(\alpha (a)(\alpha ^3(y)(\alpha (a)z)))\\
&\overset{(\ref{BHassoc})}{=}&(\alpha ^2\beta ^2(x)(a\alpha ^3(y)))(\alpha (a)\beta ^2(z))-
\alpha ^3\beta ^2(x)((a\alpha ^3(y))(\alpha\beta  (a)\beta (z)))\\
&\overset{(\ref{BHassoc})}{=}&(\alpha ^2\beta ^2(x)(a\alpha ^3(y)))(\alpha (a)\beta ^2(z))-
(\alpha ^2\beta ^2(x)(a\alpha ^3(y)))(\alpha\beta ^2 (a)\beta ^2(z)))
\overset{(\ref{suplim})}{=}0,
\end{eqnarray*}
\begin{eqnarray*}
u-u'&=& (\alpha ^2\beta ^2(x)(a\alpha ^3(y)))\ast \beta ^3(z)-\alpha ^3\beta ^2(x)(\alpha (a)(\alpha ^3(y)\ast \beta (z)))\\
&\overset{(\ref{1.11})}{=}&\alpha ^3\beta ^2(x)((a\alpha ^3(y))\ast \beta ^2(z))-
\alpha ^3\beta ^2(x)(\alpha (a)(\alpha ^3(y)\ast \beta (z)))\\
&\overset{(\ref{1.11})}{=}&\alpha ^3\beta ^2(x)(\alpha (a)(\alpha ^3(y)\ast \beta (z)))-
\alpha ^3\beta ^2(x)(\alpha (a)(\alpha ^3(y)\ast \beta (z)))=0.
\end{eqnarray*}
The last thing to prove is that $v-v'$ is symmetric in $x$ and $y$, that is, we need to prove that \\[2mm]
${\;\;\;\;\;\;\;\;\;\;\;}$
$(\a^2\b^2(x)\ast \a^3\beta (y)) (\alpha (a)\b^2(z))- \a^3\b^2(x)\ast (\alpha ^3\b(y)(\alpha \beta (a)\beta (z)))$
\begin{eqnarray*}
&=& (\a^2\b^2(y)\ast \a^3\beta (x))(\alpha (a) \b^2(z))- \a^3\b^2(y)\ast (\alpha ^3\b(x)(\a\beta (a)\b(z))).
\end{eqnarray*}
We prove this identity as follows: \\[2mm]
${\;\;\;}$
$(\a^2\b^2(x)\ast \a^3\beta (y)) (\alpha (a)\b^2(z))- (\a^2\b^2(y)\ast \a^3\beta (x))(\alpha (a) \b^2(z))$
\begin{eqnarray*}
&\overset{(\ref{suplim})}{=}&(\a^2\b^2(x)\ast \a^3\beta (y)) (\alpha \beta ^2(a)\b^2(z))- (\a^2\b^2(y)\ast \a^3\beta (x))(\alpha \beta ^2(a) \b^2(z))\\
&=&(\beta (\a^2\b(x))\ast \alpha (\a^2\beta (y))) \beta (\alpha \beta (a)\b(z))-
(\beta (\a^2\b(y))\ast \alpha (\a^2\beta (x)))\beta (\alpha \beta (a) \b(z))\\
&\overset{(\ref{4.1})}{=}&\alpha \beta (\alpha ^2\beta (x))\ast (\alpha (\alpha ^2\beta (y))(\alpha \beta (a)\beta (z)))-
\alpha \beta (\alpha ^2\beta (y))\ast (\alpha (\alpha ^2\beta (x))(\alpha \beta (a)\beta (z)))\\
&=&\a^3\b^2(x)\ast (\alpha ^3\b(y)(\alpha \beta (a)\beta (z)))-
\a^3\b^2(y)\ast (\alpha ^3\b(x)(\a\beta (a)\b(z))).
\end{eqnarray*}

From all the above, it follows that indeed the expression
 $(\b^2(x)\times \a^2(y))\times \b^2(z)- \a^2 \b^2(x)\times(\a^2(y)\times z)$ is symmetric in $x$ and $y$,
finishing the proof.
\end{proof}
\begin{lemma}\label{lemma 3.2}
In the hypotheses of Theorem \ref{theorem 2}, $\overline{A}$ satisfies the compatibility conditions (\ref{4.1}), (\ref{4.2})
and (\ref{1.11}).
\end{lemma}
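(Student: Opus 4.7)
The plan is to verify each of (\ref{4.1}), (\ref{4.2}) and (\ref{1.11}) for $\overline{A}$ separately by direct expansion. Since $x\times y=\alpha(x)\ast\beta(y)+\alpha(x)(\alpha(a)y)$ is a sum of two terms and $\cdot_{\alpha,\beta}$ is the plain Yau twist of $\mu$, each relation for $\overline{A}$ fans out into a few summands which group naturally into a $\ast$-part (containing $\ast$ at the outermost level) and a $\mu$-part (involving only $\mu$ and the element $\alpha(a)$). The $\ast$-part will be handled by the corresponding axiom of the original BiHom-Novikov-Poisson algebra $(A,\mu,\ast,\alpha,\beta)$, while the $\mu$-part is handled by iterated BiHom-associativity, BiHom-commutativity, and the fixed-point hypothesis $\alpha^2(a)=\beta^2(a)=a$.

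For (\ref{1.11}) I would expand both $\alpha^2(x)\cdot_{\alpha,\beta}(y\times z)$ and $(x\cdot_{\alpha,\beta}y)\times\beta^2(z)$ into two summands each and match them in pairs: the $\ast$-summands coincide by a single application of (\ref{1.11}) in $(A,\mu,\ast,\alpha,\beta)$, while the $\mu$-summands are identified by one BiHom-associativity move, once $\beta^2(a)=a$ is used to write $\alpha(a)\beta^2(z)=\beta(\alpha\beta(a)\beta(z))$. The identity (\ref{4.2}) follows the same bi-partitioning: the $\ast$-summands match via (\ref{4.2}) of the original algebra, while the $\mu$-summands match after (i) rewriting $\alpha(a)$ as $\beta(\alpha\beta(a))$ to expose a $\beta$-image, (ii) applying BiHom-associativity to pull the resulting $\beta$-factor outside, (iii) applying BiHom-commutativity to swap the factor containing $y$ with the one containing $a$, and (iv) a second BiHom-associativity move to regroup.

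The most delicate case is (\ref{4.1}). Expanding the difference $(\beta^2(x)\times\alpha^2(y))\cdot_{\alpha,\beta}\beta^2(z)-\alpha^2\beta^2(x)\times(\alpha^2(y)\cdot_{\alpha,\beta}z)$ and using $\alpha^2(a)=a$ to simplify $\alpha^2(a)\alpha^3(y)$ to $a\alpha^3(y)$ produces four summands which regroup as
\[
T_1:=(\alpha^2\beta^2(x)\ast\alpha^3\beta(y))\beta^3(z)-\alpha^3\beta^2(x)\ast(\alpha^3\beta(y)\beta^2(z))
\]
together with
\[
T_2:=(\alpha^2\beta^2(x)(a\alpha^3(y)))\beta^3(z)-\alpha^3\beta^2(x)(\alpha(a)(\alpha^3(y)\beta(z))).
\]
I would then show that $T_1$ is symmetric in $x$ and $y$ by one direct appeal to (\ref{4.1}) in $(A,\mu,\ast,\alpha,\beta)$ (with the substitution $\beta(u)=\alpha^2\beta^2(x)$, $\alpha(v)=\alpha^3\beta(y)$, $\beta(w)=\beta^3(z)$), and that $T_2$ vanishes identically by two successive applications of BiHom-associativity that transform its first summand into its second.

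The principal obstacle is the bookkeeping of the twists $\alpha^k,\beta^k$ and the placement of $a$ inside nested products. The key technical device throughout is the fixed-point condition $\alpha^2(a)=\beta^2(a)=a$, which permits rewriting $\alpha(a)$ interchangeably as $\alpha^{2k+1}(a)$ or $\alpha\beta^{2k}(a)$, thereby realizing it as a $\beta$- or $\alpha$-image of another expression exactly when BiHom-associativity must be invoked at a position where it would otherwise not apply directly. Once these position-shifts are in place, the matching of summands is mechanical.
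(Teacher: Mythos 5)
Your proposal is correct and, for (\ref{1.11}) and (\ref{4.2}), follows the paper's proof essentially verbatim: expand, split each identity into a $\ast$-part and a $\mu$-part, dispatch the $\ast$-part by the corresponding axiom of $(A,\mu,\ast,\alpha,\beta)$, and rearrange the $\mu$-part using (\ref{BHassoc}), (\ref{commu}) and (\ref{suplim}). (For the $\mu$-part of (\ref{4.2}) your four-step recipe undercounts the moves actually required --- besides relocating $a$, one must also interchange the positions of $y$ and $z$ by a further application of (\ref{commu}), and the paper's chain runs to roughly eight elementary steps --- but this is exactly the mechanical bookkeeping you defer, and it does go through.) The one place where you genuinely depart from the paper is (\ref{4.1}): you claim the $\mu$-part $T_2$ vanishes identically, and this is correct, since two applications of (\ref{BHassoc}) give
\begin{equation*}
\alpha^3\beta^2(x)\bigl(\alpha(a)(\alpha^3(y)\beta(z))\bigr)
=\alpha^3\beta^2(x)\bigl((a\alpha^3(y))\beta^2(z)\bigr)
=\bigl(\alpha^2\beta^2(x)(a\alpha^3(y))\bigr)\beta^3(z).
\end{equation*}
The paper instead proves that $T_2$ is symmetric in $x$ and $y$ through a considerably longer chain that also invokes (\ref{commu}) and (\ref{suplim}); your observation subsumes that argument (an identically zero expression is in particular symmetric) and is a genuine, if local, simplification. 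Your application of (\ref{4.1}) to $T_1$ with $u=\alpha^2\beta(x)$, $v=\alpha^2\beta(y)$, $w=\beta^2(z)$ is likewise exactly right and requires no bijectivity of the structure maps.
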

\begin{proof}
To prove  (\ref{4.2}) for $\overline{A}$, we compute as follows:\\[2mm]
${\;\;\;\;\;}$
$ (x\cdot_{\a, \b} \b^2(y))\times \a^2\b^2(z)$
\begin{eqnarray*}
&=& (\alpha (x)\beta ^3(y))\times \alpha ^2\beta ^2(z)\\
&=&(\alpha ^2(x)\alpha \beta ^3(y))\ast \alpha ^2\beta ^3(z)+(\alpha ^2(x)\alpha \beta ^3(y))
(\alpha (a)\alpha ^2\beta ^2(z))\\
&=&(\alpha ^2(x)\beta (\alpha \beta ^2(y)))\ast \alpha \beta (\alpha \beta ^2(z))+
\alpha (\alpha (x)\beta ^3(y))(\alpha (a)\alpha ^2\beta ^2(z))\\
&\overset{(\ref{BHassoc}), (\ref{4.2})}{=}&(\alpha ^2(x)\ast \beta (\alpha \beta ^2(z)))\alpha \beta (\alpha \beta ^2(y))+
((\alpha (x)\beta ^3(y))\alpha (a))\alpha ^2\beta ^3(z)\\
&\overset{(\ref{suplim})}{=}&(\alpha ^2(x)\ast \alpha \beta ^3(z))\alpha ^2 \beta ^3(y)+
((\alpha (x)\beta ^3(y))\alpha \beta ^2(a))\alpha ^2\beta ^3(z)\\
&\overset{(\ref{BHassoc})}{=}&(\alpha ^2(x)\ast \alpha \beta ^3(z))\alpha ^2 \beta ^3(y)+
(\alpha ^2(x)(\beta ^3(y)\alpha \beta (a)))\alpha ^2\beta ^3(z)\\
&\overset{(\ref{commu})}{=}&(\alpha ^2(x)\ast \alpha \beta ^3(z))\alpha ^2 \beta ^3(y)+
(\alpha ^2(x)(\beta ^2(a)\alpha \beta ^2(y)))\alpha ^2\beta ^3(z)\\
&\overset{(\ref{BHassoc}), (\ref{suplim})}{=}&(\alpha ^2(x)\ast \alpha \beta ^3(z))\alpha ^2 \beta ^3(y)+
((\alpha (x)a)\alpha \beta ^3(y))\alpha ^2\beta ^3(z)\\
&\overset{(\ref{BHassoc})}{=}&(\alpha ^2(x)\ast \alpha \beta ^3(z))\alpha ^2 \beta ^3(y)+
(\alpha ^2(x)\alpha (a))(\alpha \beta ^3(y)\alpha ^2\beta ^2(z))\\
&\overset{(\ref{commu})}{=}&(\alpha ^2(x)\ast \alpha \beta ^3(z))\alpha ^2 \beta ^3(y)+
(\alpha ^2(x)\alpha (a))(\alpha \beta ^3(z)\alpha ^2\beta ^2(y))\\
&\overset{(\ref{BHassoc})}{=}&(\alpha ^2(x)\ast \alpha \beta ^3(z))\alpha ^2 \beta ^3(y)+
((\alpha (x)a)\alpha \beta ^3(z))\alpha ^2\beta ^3(y)\\
&\overset{(\ref{BHassoc})}{=}&(\alpha ^2(x)\ast \alpha \beta ^3(z))\alpha ^2 \beta ^3(y)+
(\alpha ^2(x)(a\alpha \beta ^2(z)))\alpha ^2\beta ^3(y)\\
&\overset{(\ref{suplim})}{=}&(\alpha (x)\ast \beta ^3(z)+\alpha (x)(\alpha (a)\beta ^2(z)))\cdot _{\alpha , \beta }
\alpha ^2\beta ^2(y)
=(x\times \beta ^2(z))\cdot _{\alpha , \beta }
\alpha ^2\beta ^2(y).
\end{eqnarray*}

To prove (\ref{4.1}) for $\overline{A}$, we compute:\\[2mm]
${\;\;\;}$
$(\b^2(x)\times \a^2(y))\cdot_{\a , \b} \b^2(z)- \a^2\b^2(x) \times (\a^2(y)\cdot_{\a , \b} z)$
\begin{eqnarray*}
&=&(\alpha \beta ^2(x)\ast \alpha ^2\beta (y)+\alpha \beta ^2(x)(\alpha (a)\alpha ^2(y)))
\cdot_{\a , \b} \b^2(z)\\
&&- \a^2\b^2(x) \times (\a^3(y)\beta (z))\\
&\overset{(\ref{suplim})}{=}&(\alpha ^2\beta ^2(x)\ast \alpha ^3\beta (y))\beta ^3(z)+(\alpha ^2\beta ^2(x)(a\alpha ^3(y)))\beta ^3(z)\\
&&-\alpha ^3\beta ^2(x)\ast (\alpha ^3\beta (y)\beta ^2(z))-\alpha ^3\beta ^2(x)(\alpha (a)(\alpha ^3(y)\beta (z)))\\
&\overset{(\ref{BHassoc}), (\ref{suplim})}{=}&(\beta (\alpha ^2\beta (x))\ast \alpha (\alpha ^2\beta (y)))\beta (\beta ^2(z))
-\alpha \beta (\alpha ^2\beta (x))\ast (\alpha (\alpha ^2\beta (y))\beta ^2(z))\\
&&+(\alpha ^2\beta ^2(x)(\beta ^2(a)\alpha ^3(y)))\beta ^3(z)-
\alpha ^3\beta ^2(x)((a\alpha ^3(y))\beta ^2(z))\\
&\overset{(\ref{commu}), (\ref{suplim})}{=}&(\beta (\alpha ^2\beta (x))\ast \alpha (\alpha ^2\beta (y)))\beta (\beta ^2(z))
-\alpha \beta (\alpha ^2\beta (x))\ast (\alpha (\alpha ^2\beta (y))\beta ^2(z))\\
&&+(\alpha ^2\beta ^2(x)(\alpha ^2\beta (y)\alpha \beta (a)))\beta ^3(z)-
\alpha ^3\beta ^2(x)((\beta ^2(a)\alpha ^3(y))\beta ^2(z))\\
&\overset{(\ref{commu}), (\ref{suplim}), (\ref{BHassoc})}{=}&
(\beta (\alpha ^2\beta (x))\ast \alpha (\alpha ^2\beta (y)))\beta (\beta ^2(z))
-\alpha \beta (\alpha ^2\beta (x))\ast (\alpha (\alpha ^2\beta (y))\beta ^2(z))\\
&&+((\alpha \beta ^2(x)\alpha ^2\beta (y))\alpha (a))\beta ^3(z)-
\alpha ^3\beta ^2(x)((\alpha ^2\beta (y)\alpha \beta (a))\beta ^2(z))\\
&\overset{(\ref{commu}), (\ref{BHassoc})}{=}&
(\beta (\alpha ^2\beta (x))\ast \alpha (\alpha ^2\beta (y)))\beta (\beta ^2(z))
-\alpha \beta (\alpha ^2\beta (x))\ast (\alpha (\alpha ^2\beta (y))\beta ^2(z))\\
&&+((\alpha \beta ^2(y)\alpha ^2\beta (x))\alpha (a))\beta ^3(z)-
\alpha ^3\beta ^2(x)(\alpha ^3\beta (y)(\alpha \beta (a)\beta (z)))\\
&\overset{(\ref{suplim}), (\ref{BHassoc})}{=}&
(\beta (\alpha ^2\beta (x))\ast \alpha (\alpha ^2\beta (y)))\beta (\beta ^2(z))
-\alpha \beta (\alpha ^2\beta (x))\ast (\alpha (\alpha ^2\beta (y))\beta ^2(z))\\
&&+((\alpha \beta ^2(y)\alpha ^2\beta (x))\alpha (a))\beta ^3(z)-
(\alpha ^2\beta ^2(x)\alpha ^3\beta (y))(\alpha (a)\beta ^2(z))\\
&\overset{(\ref{suplim}), (\ref{commu})}{=}&
(\beta (\alpha ^2\beta (x))\ast \alpha (\alpha ^2\beta (y)))\beta (\beta ^2(z))
-\alpha \beta (\alpha ^2\beta (x))\ast (\alpha (\alpha ^2\beta (y))\beta ^2(z))\\
&&+((\alpha \beta ^2(y)\alpha ^2\beta (x))\alpha \beta ^2(a))\beta ^3(z)-
(\alpha ^2\beta ^2(y)\alpha ^3\beta (x))(\alpha (a)\beta ^2(z))\\
&\overset{(\ref{suplim}), (\ref{BHassoc})}{=}&
(\beta (\alpha ^2\beta (x))\ast \alpha (\alpha ^2\beta (y)))\beta (\beta ^2(z))
-\alpha \beta (\alpha ^2\beta (x))\ast (\alpha (\alpha ^2\beta (y))\beta ^2(z))\\
&&+(\alpha ^2\beta ^2(y)(\alpha ^2\beta (x)\alpha \beta (a)))\beta ^3(z)-
(\alpha ^2\beta ^2(y)\alpha ^3\beta (x))(\alpha \beta ^2(a)\beta ^2(z))\\
&\overset{(\ref{commu}), (\ref{BHassoc})}{=}&
(\beta (\alpha ^2\beta (x))\ast \alpha (\alpha ^2\beta (y)))\beta (\beta ^2(z))
-\alpha \beta (\alpha ^2\beta (x))\ast (\alpha (\alpha ^2\beta (y))\beta ^2(z))\\
&&+(\alpha ^2\beta ^2(y)(\beta ^2(a)\alpha ^3(x)))\beta ^3(z)-
\alpha ^3\beta ^2(y)(\alpha ^3\beta (x)(\alpha \beta (a)\beta (z)))\\
&\overset{(\ref{suplim}), (\ref{BHassoc})}{=}&
(\beta (\alpha ^2\beta (x))\ast \alpha (\alpha ^2\beta (y)))\beta (\beta ^2(z))
-\alpha \beta (\alpha ^2\beta (x))\ast (\alpha (\alpha ^2\beta (y))\beta ^2(z))\\
&&+(\alpha ^2\beta ^2(y)(a\alpha ^3(x)))\beta ^3(z)-
\alpha ^3\beta ^2(y)((\alpha ^2\beta (x)\alpha \beta (a))\beta ^2(z))\\
&\overset{(\ref{suplim}), (\ref{commu})}{=}&
(\beta (\alpha ^2\beta (x))\ast \alpha (\alpha ^2\beta (y)))\beta (\beta ^2(z))
-\alpha \beta (\alpha ^2\beta (x))\ast (\alpha (\alpha ^2\beta (y))\beta ^2(z))\\
&&+(\alpha ^2\beta ^2(y)(a\alpha ^3(x)))\beta ^3(z)-
\alpha ^3\beta ^2(y)((a\alpha ^3(x))\beta ^2(z))\\
&\overset{(\ref{4.1}), (\ref{BHassoc})}{=}&(\alpha ^2\beta ^2(y)\ast \alpha ^3\beta (x))\beta ^3(z)-
\alpha ^3\beta ^2(y)\ast (\alpha ^3\beta (x)\beta ^2(z))\\
&&+(\alpha ^2\beta ^2(y)(a\alpha ^3(x)))\beta ^3(z)-\alpha ^3\beta ^2(y)(\alpha (a)(\alpha ^3(x)\beta (z)))\\
&=&(\alpha ^2\beta ^2(y)\ast \alpha ^3\beta (x)+\alpha ^2\beta ^2(y)(a\alpha ^3(x)))\beta ^3(z)\\
&&-\alpha ^3\beta ^2(y)\ast (\alpha ^3\beta (x)\beta ^2(z))-
\alpha ^3\beta ^2(y)(\alpha (a)(\alpha ^3(x)\beta (z)))\\
&\overset{(\ref{suplim})}{=}&(\alpha \beta ^2(y)\ast \alpha ^2\beta (x)+\alpha \beta ^2(y)(\alpha (a)\alpha ^2(x)))
\cdot _{\alpha , \beta }\beta ^2(z)\\
&&-\alpha (\alpha ^2\beta ^2(y))\ast \beta (\alpha ^3 (x)\beta (z))-
\alpha (\alpha ^2\beta ^2(y))(\alpha (a)(\alpha ^3(x)\beta (z)))\\
&=&(\beta ^2(y)\times \alpha ^2(x))\cdot _{\alpha , \beta }\beta ^2(z)-
\alpha ^2\beta ^2(y)\times (\alpha ^3(x)\beta (z))\\
&=&(\beta ^2(y)\times \alpha ^2(x))\cdot _{\alpha , \beta }\beta ^2(z)-
\alpha ^2\beta ^2(y)\times (\alpha ^2(x)\cdot _{\alpha , \beta }z).
\end{eqnarray*}
Finally, to prove (\ref{1.11}) we compute:
\begin{eqnarray*}
\alpha ^2(x)\cdot _{\alpha , \beta }(y\times z)&=&\alpha ^2(x)\cdot _{\alpha , \beta }(\alpha (y)\ast \beta (z)+
\alpha (y)(\alpha (a)z))\\
&=&\alpha ^3(x)(\alpha \beta (y)\ast \beta ^2(z))+\alpha ^3(x)(\alpha \beta (y)(\alpha \beta (a)\beta (z)))\\
&\overset{(\ref{suplim}), (\ref{BHassoc}), (\ref{1.11})}{=}&(\alpha ^2(x)\alpha \beta (y))\ast \beta ^3(z)+
(\alpha ^2(x)\alpha \beta (y))(\alpha (a)\beta ^2(z))\\
&=&\alpha (\alpha (x)\beta (y))\ast \beta (\beta ^2(z))+\alpha (\alpha (x)\beta (y))(\alpha (a)\beta ^2(z))\\
&=&(\alpha (x)\beta (y))\times \beta ^2(z)=(x\cdot _{\alpha , \beta }y)\times \beta ^2(z),
\end{eqnarray*}
finishing the proof.
\end{proof}

The next result is the Hom-version of Theorem \ref{theorem 2} with $\a=\b$, which is Theorem 4.4 in
\cite{yau3}.
\begin{corollary}\label{Coro 3.6}
Let $(A, \mu , \ast, \alpha )$ be a multiplicative Hom-Novikov-Poisson algebra and $a\in A$ an element satisfying $\a^2(a)=a$.
Then $\overline{A}=(A, \cdot_{\a}, \times, \alpha^2)$ is also a multiplicative Hom-Novikov-Poisson algebra, where
$x\cdot_{\a} y=\a(x)\a(y)$ and
$x\times y=\a(x)\ast \a(y)+ a(xy)$,
for all $x, y\in A$.
\end{corollary}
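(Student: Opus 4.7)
The plan is to deduce this corollary directly from Theorem \ref{theorem 2} by specializing $\beta = \alpha$, rather than redoing any of the tedious compatibility verifications. A multiplicative Hom-Novikov-Poisson algebra $(A,\mu,\ast,\alpha)$ is automatically a BiHom-Novikov-Poisson algebra $(A,\mu,\ast,\alpha,\alpha)$ (BiHom-commutativity with $\beta = \alpha$ reduces to Hom-commutativity, and all other axioms match), and the hypothesis $\alpha^2(a)=a$ supplies both of the conditions $\alpha^2(a)=\beta^2(a)=a$ needed in Theorem \ref{theorem 2}. Applying that theorem yields a BiHom-Novikov-Poisson algebra with structure maps $\alpha^2 = \beta^2$, which is exactly a multiplicative Hom-Novikov-Poisson algebra.

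What remains is to check that the perturbed operations in Theorem \ref{theorem 2} specialize to the formulas stated in the corollary. The commutative product is immediate: $x\cdot_{\alpha,\beta} y = \alpha(x)\beta(y) = \alpha(x)\alpha(y) = x\cdot_{\alpha}y$. For the Novikov product, one gets $x\times y = \alpha(x)\ast\beta(y) + \alpha(x)(\alpha(a)y) = \alpha(x)\ast\alpha(y) + \alpha(x)(\alpha(a)y)$, so the only thing to verify is the identification
\begin{eqnarray*}
\alpha(x)(\alpha(a)y) = a(xy).
\end{eqnarray*}

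This is a short calculation using Hom-associativity, Hom-commutativity, and $\alpha^2(a)=a$:
\begin{eqnarray*}
\alpha(x)(\alpha(a)y) \overset{(\ref{BHassoc})}{=} (x\alpha(a))\alpha(y) \overset{(\ref{commu})}{=} (\alpha(a)x)\alpha(y) \overset{(\ref{BHassoc})}{=} \alpha^2(a)(xy) = a(xy).
\end{eqnarray*}
This is essentially the same rewriting used in the discussion following Lemma \ref{lemma 3.1}, and there is no genuine obstacle here; the whole point is that Theorem \ref{theorem 2} was designed so that its statement collapses cleanly to Yau's theorem in the Hom-case. Accordingly the bulk of the work has already been done, and the corollary follows by specialization plus the one-line identification above.
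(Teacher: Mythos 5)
Your proposal is correct and is essentially the paper's own (implicit) argument: the paper presents this corollary precisely as the $\alpha=\beta$ specialization of Theorem \ref{theorem 2}, and the identification $\alpha(x)(\alpha(a)y)=a(xy)$ is exactly the computation displayed in the discussion following Lemma \ref{lemma 3.1}, valid here because a multiplicative Hom-Novikov-Poisson algebra has a genuinely commutative product $\mu$. Nothing is missing.
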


Forgetting about the BiHom-associative product $\cdot_{\a , \b}$ in Theorem \ref{theorem 2}, we obtain a non-trivial way to construct a BiHom-Novikov algebra from a BiHom-Novikov-Poisson algebra:
\begin{corollary}\label{Coro 3.7}
Let $(A, \mu , \ast, \alpha , \beta )$ be a BiHom-Novikov-Poisson algebra and $a\in A$ an element
satisfying $\a^2(a)=\b^2 (a)=a$.
Then $(A, \times, \alpha^2, \beta^2)$ is a BiHom-Novikov algebra, where
$x\times y = \a(x)\ast \b(y)+ \alpha (x)(\alpha (a)y)$,
for all $x, y\in A$.
\end{corollary}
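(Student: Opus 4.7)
The statement is essentially a direct consequence of what has already been established in Theorem \ref{theorem 2}, so the plan is to quote rather than recompute. My proof proposal is the following.

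The key observation is that the BiHom-Novikov axioms for the operation $\times$ on $(A, \alpha^2, \beta^2)$ are precisely the content of Lemma \ref{lemma 3.3}: the multiplicativity conditions \eqref{BiNovik} and the two identities \eqref{BiNoviko} and \eqref{Binovikov} have already been verified there for the same $\times$ and the same structure maps, using only the BiHom-Novikov-Poisson axioms on $(A,\mu,\ast,\alpha,\beta)$ and the hypothesis $\alpha^2(a)=\beta^2(a)=a$. The additional BiHom-commutative product $\cdot_{\alpha,\beta}$ and the compatibility conditions \eqref{4.1}--\eqref{1.11} proved in Lemma \ref{lemma 3.2} play no role in Lemma \ref{lemma 3.3}, so they can be simply discarded.

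Thus the plan is: invoke Lemma \ref{lemma 3.3} with the element $a$ given by hypothesis; this yields directly that $(A,\times,\alpha^2,\beta^2)$ satisfies \eqref{BiNovik}, \eqref{BiNoviko} and \eqref{Binovikov}, which by Definition \ref{binovialgebra} is exactly what it means to be a BiHom-Novikov algebra. No new calculation is required, and there is no main obstacle to overcome — all the heavy lifting (in particular the delicate symmetrization arguments in $x,y$ and in $y,z$ of the various four-term expansions) was already done in the proof of Lemma \ref{lemma 3.3}. The corollary is therefore recorded only to emphasize that this construction produces new BiHom-Novikov algebras from BiHom-Novikov-Poisson data, independently of whether one cares about the second (BiHom-commutative associative) product.
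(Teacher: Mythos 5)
Your proposal is correct and matches the paper's own justification: the paper obtains Corollary \ref{Coro 3.7} precisely by ``forgetting'' the product $\cdot_{\alpha,\beta}$ in Theorem \ref{theorem 2}, whose BiHom-Novikov part is exactly Lemma \ref{lemma 3.3}, and you are right that the proof of that lemma uses only the axioms of the original BiHom-Novikov-Poisson algebra together with $\alpha^2(a)=\beta^2(a)=a$, not the conclusions of Lemma \ref{lemma 3.2}. No further comment is needed.
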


The following perturbation result is obtained by combining Theorem \ref{theorem 1} and Theorem \ref{theorem 2}.
\begin{corollary}\label{Coro 3.9}
Let $(A, \mu, \ast, \alpha , \beta )$ be a BiHom-Novikov-Poisson algebra and $a, b \in A$ elements such that
$\a^2(a)=\b^2 (a)=a$ and
$\a^4(b)=\b^4 (b)=b$.
Then $\widetilde{A}=(A, \diamond, \boxtimes, \alpha^4, \beta^4)$ is also a BiHom-Novikov-Poisson algebra, where
\begin{eqnarray*}
x\diamond y &=& \alpha ^3(x)(\alpha ^3\beta (b)\beta ^2(y)), \;\;\;\forall \; x, y\in A,\\
x\boxtimes y &=& \a^3(x)\ast \b^3(y)+ \alpha ^3(x)(\alpha (a)\beta ^2(y)), \;\;\;\forall \; x, y\in A.
\end{eqnarray*}
\end{corollary}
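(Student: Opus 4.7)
The plan is to apply the two previous perturbation results in sequence: first Theorem \ref{theorem 2} using the element $a$, and then Theorem \ref{theorem 1} using the element $b$. Concretely, I would first invoke Theorem \ref{theorem 2} on $(A,\mu,\ast,\alpha,\beta)$ with the element $a$ (the hypothesis $\alpha^2(a)=\beta^2(a)=a$ is exactly what we have). This produces the BiHom-Novikov-Poisson algebra
\begin{eqnarray*}
\overline{A}=(A,\cdot_{\alpha,\beta},\times,\alpha^2,\beta^2),
\end{eqnarray*}
where $x\cdot_{\alpha,\beta}y=\alpha(x)\beta(y)$ and $x\times y=\alpha(x)\ast\beta(y)+\alpha(x)(\alpha(a)y)$.

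Next, I would check that the hypotheses of Theorem \ref{theorem 1} are satisfied for $\overline{A}$ with the element $b$: the structure maps of $\overline{A}$ are $\alpha^2$ and $\beta^2$, and the required condition $(\alpha^2)^2(b)=(\beta^2)^2(b)=b$ is exactly our assumption $\alpha^4(b)=\beta^4(b)=b$. Applying Theorem \ref{theorem 1} then yields a BiHom-Novikov-Poisson algebra with structure maps $(\alpha^2)^2=\alpha^4$ and $(\beta^2)^2=\beta^4$, commutative product $x\diamond'y=\alpha^2(x)\cdot_{\alpha,\beta}(\alpha^2(b)\cdot_{\alpha,\beta}y)$, and Novikov product $x\boxtimes' y=\alpha^2(x)\times\beta^2(y)$.

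The only remaining task is to unravel these formulas and show that they agree with $\diamond$ and $\boxtimes$ as defined in the statement. For $\diamond'$, using multiplicativity of $\beta$ with respect to $\mu$, one computes
\begin{eqnarray*}
\alpha^2(b)\cdot_{\alpha,\beta}y=\alpha^3(b)\beta(y),\quad
\alpha^2(x)\cdot_{\alpha,\beta}(\alpha^3(b)\beta(y))=\alpha^3(x)(\alpha^3\beta(b)\beta^2(y)),
\end{eqnarray*}
which is precisely $x\diamond y$. For $\boxtimes'$, expanding $\alpha^2(x)\times\beta^2(y)$ using the definition of $\times$ in $\overline{A}$ gives $\alpha^3(x)\ast\beta^3(y)+\alpha^3(x)(\alpha(a)\beta^2(y))$, which is precisely $x\boxtimes y$.

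This reduces the proof to the two steps above plus routine unraveling; no genuinely new computation is required. The only potential subtlety to keep in mind is that when invoking Theorem \ref{theorem 1} on $\overline{A}$, the ``$\alpha$'' in the formula $x\diamond y=\alpha(x)(\alpha(a)y)$ must be interpreted as the structure map of $\overline{A}$ (namely $\alpha^2$) and the product as $\cdot_{\alpha,\beta}$; keeping this bookkeeping straight is the only place where a careful reader needs to pause. Once that is done, the identification of the operations with $\diamond$ and $\boxtimes$ is immediate from multiplicativity.
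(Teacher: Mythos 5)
Your proposal is correct and follows exactly the paper's own argument: first apply Theorem \ref{theorem 2} with $a$ to get $\overline{A}$, then apply Theorem \ref{theorem 1} to $\overline{A}$ with $b$ (noting $(\alpha^2)^2(b)=(\beta^2)^2(b)=b$), and identify the resulting operations with $\diamond$ and $\boxtimes$. Your explicit unraveling of the formulas is accurate and in fact more detailed than what the paper records.
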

\begin{proof}
By Theorem \ref{theorem 2}, we get that $\overline{A}=(A, \cdot_{\a , \b}, \times, \alpha^2, \beta^2)$ is a BiHom-Novikov-Poisson algebra. Now apply Theorem \ref{theorem 1} to $\overline{A}$ and the element $b\in A$, which satisfies $(\a^2)^2(b)=(\b^2)^2(b)=b$. We obtain a BiHom-Novikov-Poisson algebra $(\overline{A})'$, which is $\widetilde{A}$ above.
\end{proof}

The following result is a special case of Corollary \ref{Coro 3.9}.
\begin{corollary}\label{Coro 3.10}
Let $(A, \mu )$ be a commutative and associative algebra, $\a, \b: A \rightarrow A$ two commuting algebra morphisms, and $D: A\rightarrow A$ a derivation such that $D\circ \a=\a \circ D$
and $D\circ \b=\b \circ D$. Let $a, b \in A$ be elements such that $\a^2(a)=\b^2 (a)=a$ and
$\a^4(b)=\b^4 (b)=b$.
Then $(A, \lozenge, \boxdot, \alpha^4, \beta^4)$ is a BiHom-Novikov-Poisson algebra, where
\begin{eqnarray*}
x\lozenge y &=& \alpha ^4(x)\beta ^2(b)\beta ^4(y), \;\;\; \forall \; x, y\in A, \\
x\boxdot y &=& \alpha ^4(x)D(\beta ^4(y))+\alpha ^4(x)\beta (a)\beta ^4(y), \;\;\; \forall \; x, y\in A.
\end{eqnarray*}
\end{corollary}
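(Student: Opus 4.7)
The plan is to realize $(A,\lozenge,\boxdot,\alpha^4,\beta^4)$ as the output of Corollary~\ref{Coro 3.9} applied to a suitable BiHom-Novikov-Poisson algebra built from $(A,\mu,D,\alpha,\beta)$. So the first step is to produce that starting BiHom-Novikov-Poisson structure, and the second step is just a (careful but routine) matching of formulas.

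\textbf{Step 1: The classical Novikov-Poisson structure from a derivation.} Starting from $(A,\mu)$ commutative associative and the derivation $D$, I would define $x*y:=xD(y)$ and check that $(A,\mu,*)$ is a Novikov-Poisson algebra. The Novikov identities follow from $D$ being a derivation: $(x*y)*z=xD(y)D(z)$ is symmetric in $y,z$, giving (\ref{Binovikov}) in the ``non-BiHom'' limit, and for left pre-Lie one gets $x*(y*z)-(x*y)*z=xyD^2(z)$, symmetric in $x,y$. The compatibility conditions (\ref{NP1})--(\ref{NP2}) collapse to $(x*y)z-x*(yz)=-xyD(z)$ (symmetric in $x,y$) and $(xy)*z=xyD(z)=(x*z)y$, so both hold.

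\textbf{Step 2: Yau twisting to a BiHom-Novikov-Poisson algebra.} Since $\alpha,\beta$ are commuting algebra morphisms of $(A,\mu)$ and commute with $D$, they are also morphisms of $(A,*)$: $\alpha(x*y)=\alpha(x)D(\alpha(y))=\alpha(x)*\alpha(y)$, and similarly for $\beta$. Thus the Yau twist construction (the BiHom-Novikov-Poisson version noted after Definition~\ref{binovipoisson algebra} and already exploited in \cite{lmmp4}) produces a BiHom-Novikov-Poisson algebra $(A,\cdot_{\alpha,\beta},*_{\alpha,\beta},\alpha,\beta)$ with
\[
x\cdot_{\alpha,\beta}y=\alpha(x)\beta(y),\qquad x*_{\alpha,\beta}y=\alpha(x)*\beta(y)=\alpha(x)\,D(\beta(y)).
\]

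\textbf{Step 3: Apply Corollary \ref{Coro 3.9}.} The hypotheses $\alpha^2(a)=\beta^2(a)=a$ and $\alpha^4(b)=\beta^4(b)=b$ are exactly those of Corollary~\ref{Coro 3.9} for this BiHom-Novikov-Poisson algebra. So the corollary immediately yields a BiHom-Novikov-Poisson algebra $(A,\diamond,\boxtimes,\alpha^4,\beta^4)$, and it only remains to identify $\diamond$ with $\lozenge$ and $\boxtimes$ with $\boxdot$.

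\textbf{Step 4: Formula matching.} Unpacking $x\diamond y=\alpha^3(x)\cdot_{\alpha,\beta}(\alpha^3\beta(b)\cdot_{\alpha,\beta}\beta^2(y))$ using commutativity and associativity of $\mu$ and the fact that $\alpha,\beta$ commute with $\mu$ yields $\alpha^4(x)\,\alpha^4\beta^2(b)\,\beta^4(y)$, which by $\alpha^4(b)=b$ equals $\alpha^4(x)\beta^2(b)\beta^4(y)=x\lozenge y$. Similarly
\[
\alpha^3(x)*_{\alpha,\beta}\beta^3(y)=\alpha^4(x)\,D(\beta^4(y)),
\]
and $\alpha^3(x)\cdot_{\alpha,\beta}(\alpha(a)\cdot_{\alpha,\beta}\beta^2(y))=\alpha^4(x)\,\alpha^2\beta(a)\,\beta^4(y)=\alpha^4(x)\beta(a)\beta^4(y)$ using $\alpha^2(a)=a$, so the sum is exactly $x\boxdot y$. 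There is no genuine obstacle; the only mildly delicate point is making sure that the repeated applications of $\alpha,\beta$ land on the right powers so that $\alpha^2(a)=a$ and $\alpha^4(b)=b$ can be invoked to produce the clean formulas in the statement.
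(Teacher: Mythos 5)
Your proposal is correct and follows essentially the same route as the paper: both realize $(A,\lozenge,\boxdot,\alpha^4,\beta^4)$ as the output of Corollary~\ref{Coro 3.9} applied to the BiHom-Novikov-Poisson algebra $(A,\,\alpha(x)\beta(y),\,\alpha(x)D(\beta(y)),\,\alpha,\beta)$, the only difference being that the paper obtains this input algebra by citing Corollary 3.10 of \cite{lmmp4} while you re-derive it by checking the Novikov-Poisson axioms for $x*y=xD(y)$ and Yau-twisting. Your formula matching in Step 4 (using $\alpha^4\beta^2(b)=\beta^2(b)$ and $\alpha^2\beta(a)=\beta(a)$) is accurate and makes explicit what the paper leaves as "which is exactly".
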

\begin{proof}
By Corollary 3.10 in \cite{lmmp4}, we get that $A_{\a ,\b}=(A, \bullet, \ast , \alpha, \beta)$ is a BiHom-Novikov-Poisson algebra, where $x\bullet y=\a(x)\b(y)$ and $x \ast y=\a(x)D(\b(y))$. Now apply Corollary \ref{Coro 3.9} to $A_{\a ,\b}$ and the elements $a$ and $b$. The outcome is the BiHom-Novikov-Poisson algebra $\widetilde{A_{\a ,\b}}$, which is exactly
$(A, \lozenge, \boxdot, \alpha^4, \beta^4)$.
\end{proof}
\section{From BiHom-Novikov-Poisson algebras to BiHom-Poisson algebras}\label{sec4}
\setcounter{equation}{0}

The purpose of this section is to investigate a special class of  BiHom-Novikov-Poisson algebras, called left BiHom-associative,
having the property that, in case of bijective structure maps, they provide BiHom-Poisson algebras.

\begin{definition}\label{Def-BiHomLie} (\cite{lmmp2})
A BiHom-Lie algebra $\left( L,
\left[\cdot , \cdot \right] ,\alpha ,\beta \right) $ is a 4-tuple in which $L$ is a linear
space, $\alpha , \beta :L\rightarrow L$ are linear maps and $\left[\cdot , \cdot \right]
:L\times L\rightarrow L$ is a bilinear map,
such that
\begin{gather}
\alpha \circ \beta =\beta \circ \alpha , \label{bihomlie1} \\
\alpha (\left[ x, y \right])=\left[ \alpha \left(
x\right),\alpha (y) \right]\;\;
\text{ and } \;\;\beta (\left[x, y \right])=\left[
\beta \left( x \right) ,\beta \left( y \right) %
\right], \label{bihomlie2} \\
\left[ \beta \left( x\right) ,\alpha \left( y\right) \right] =-%
\left[ \beta \left( y\right) ,\alpha \left( x\right) \right],
\;\;\;\; \text{ (BiHom-skew-symmetry)} \label{bihomlie3}\\
\left[ \beta ^{2}\left( x\right) ,\left[ \beta \left( y\right)
,\alpha \left( z\right) \right] \right] +\left[ \beta
^{2}\left( y\right) ,\left[ \beta \left( z\right)
,\alpha \left( x\right) \right] \right] +\left[ \beta ^{2}\left( z\right) ,\left[ \beta \left( x\right) ,\alpha \left( y
\right) \right] \right] =0, \label{bihomlie4}\\
\text{ (BiHom-Jacobi condition)} \nonumber
\end{gather}
for all $x, y, z\in L$.  
The maps $\alpha $ and $\beta $ (in this order) are called the structure maps
of $L$.

\end{definition}
\begin{definition}\label{Def-BiHomPoisson}
A BiHom-Poisson algebra is a 5-tuple $(A, \mu , \left[\cdot , \cdot \right] , \alpha ,
\beta )$, with the property that

(1) $(A, \mu , \alpha , \beta )$ is a BiHom-commutative algebra;

(2) $(A, \left[\cdot , \cdot \right], \a , \b )$ is a BiHom-Lie algebra;

(3) the following BiHom-Leibniz identity holds for all $x, y, z\in A$:
\begin{eqnarray}
\left[\a\b(x) , yz \right]=\left[\b(x) , y\right]\b(z)+\b(y)\left[\a(x) , z\right].
\label{bihompoisson}
\end{eqnarray}
\end{definition}

BiHom-Poisson algebras are defined as a BiHom-type generalization of Poisson algebras (the case $\a=\b=id_A$)
and Hom-Poisson algebras (the case $\a=\b$, see
\cite{ms2} and \cite{yau3}).

Let us mention that (\ref{bihompoisson}) is actually a left-handed BiHom-Leibniz identity, so the concept introduced
in Definition \ref{Def-BiHomPoisson} is actually a sort of  left-handed BiHom-Poisson algebra, as compared to the
right-handed versions of both that are introduced in \cite{adimi}.

We are interested now in the following question: under what conditions one can construct a BiHom-Poisson algebra from a
BiHom-Novikov-Poisson algebra by taking the commutator bracket of the BiHom-Novikov product (which
automatically requires bijectivity of the structure maps)? We begin by introducing the following definitions.
\begin{definition}\label{Def-left BiHomasso}
Let $(A, \mu , \ast , \alpha ,
\beta )$ be a BiHom-Novikov-Poisson algebra. Then $A$ is called left BiHom-associative if the following condition
holds for all $x, y, z\in A$:
\begin{eqnarray}
&&\a(x)\ast (yz)=(xy)\ast \b(z).
\label{left BiHomasso 1}
\end{eqnarray}
\end{definition}

By using (\ref{1.11}), it is easy to see that (\ref{left BiHomasso 1}) holds if and only if
\begin{eqnarray}
\a(x)(y\ast z)=\a(x)\ast (yz), \;\;\; \forall \; x, y, z\in A.
\label{left BiHomasso 2}
\end{eqnarray}

\begin{definition}\label{Def-admissble}
Let $(A, \mu , \ast , \alpha ,
\beta )$ be a BiHom-Novikov-Poisson algebra with $\a, \b$ bijective. Then $A$ is called admissible if
$A^{-}:=(A, \mu , \left[ \cdot , \cdot \right] , \alpha ,
\beta )$ is a BiHom-Poisson algebra, where
\begin{eqnarray*}
\left[x , y\right]=x\ast y- \a^{-1}\b(y)\ast \a\b^{-1}(x), \;\;\; \forall \; x, y\in A.
\end{eqnarray*}
\end{definition}

We now give a necessary and sufficient condition under which a BiHom-Novikov-Poisson algebra is admissible. The Hom-type
version of this result may be found in \cite{yau3}.
\begin{theorem}\label{theorem 4.5}
Let $(A, \mu , \ast , \alpha ,
\beta )$ be a BiHom-Novikov-Poisson algebra with $\a, \b$ bijective. Then $A$ is admissible if and only if it is left BiHom-associative.
\end{theorem}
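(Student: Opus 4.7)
The plan is to observe first that the only nontrivial part of the equivalence lies in the BiHom-Leibniz identity, and then to reduce that identity, via the compatibility axioms, to exactly condition (\ref{left BiHomasso 1}). BiHom-commutativity of $(A,\mu ,\alpha ,\beta )$ is part of the data, so condition (1) of Definition \ref{Def-BiHomPoisson} is free. For condition (2) (the BiHom-Lie structure), multiplicativity of $\alpha $ and $\beta $ with respect to $[\cdot ,\cdot ]$ is immediate from their multiplicativity with respect to $\ast $, and BiHom-skew-symmetry follows directly from expanding $[\beta (x),\alpha (y)]$ and $-[\beta (y),\alpha (x)]$ and using $\alpha ^{-1}\beta (\alpha (y))=\beta (y)$ and $\alpha \beta ^{-1}(\beta (x))=\alpha (x)$. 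Finally, the BiHom-Jacobi identity for $[\cdot ,\cdot ]$ is a consequence of (\ref{BiNoviko}) (which is exactly the BiHom-left-pre-Lie axiom) via the standard fact, available in the BiHom-pre-Lie literature with bijective structure maps, that the commutator of a BiHom-left-pre-Lie product is a BiHom-Lie bracket. Thus admissibility of $A$ is equivalent to the BiHom-Leibniz identity (\ref{bihompoisson}).

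The second step is to compute each of the three terms in (\ref{bihompoisson}) separately in terms of the products $\mu $ and $\ast $. For $[\alpha \beta (x),yz]$, expanding the bracket and using multiplicativity of $\alpha ^{-1}\beta $ with respect to $\mu $ gives an expression containing $(\alpha ^{-1}\beta (y)\,\alpha ^{-1}\beta (z))\ast \alpha ^{2}(x)$; applying (\ref{1.11}) with $u=\alpha ^{-1}\beta (y)$, $v=\alpha ^{-1}\beta (z)$ and $w=\alpha ^{2}\beta ^{-1}(x)$ one obtains
\[
[\alpha \beta (x),yz]=\alpha \beta (x)\ast (yz)-\beta (y)\bigl(\alpha ^{-1}\beta (z)\ast \alpha ^{2}\beta ^{-1}(x)\bigr).
\]
For $\beta (y)[\alpha (x),z]$, a direct expansion (using $\alpha \beta ^{-1}(\alpha (x))=\alpha ^{2}\beta ^{-1}(x)$) yields $\beta (y)(\alpha (x)\ast z)-\beta (y)(\alpha ^{-1}\beta (z)\ast \alpha ^{2}\beta ^{-1}(x))$, whose second summand matches exactly the second summand of the previous line and will cancel. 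For $[\beta (x),y]\beta (z)$, the plan is to apply (\ref{4.1}) after replacing $y$ by $\alpha ^{-1}(y)$ (this is where bijectivity of $\alpha $ is used), and to use $\alpha \beta \alpha ^{-1}=\beta $ (from $\alpha \beta =\beta \alpha $); the two cross-terms $(\alpha ^{-1}\beta (y)\ast \alpha (x))\beta (z)$ that appear then cancel, leaving
\[
[\beta (x),y]\beta (z)=\alpha \beta (x)\ast (yz)-\beta (y)\ast (\alpha (x)z).
\]

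Assembling the three pieces, the difference $[\alpha \beta (x),yz]-[\beta (x),y]\beta (z)-\beta (y)[\alpha (x),z]$ collapses to $\beta (y)\ast (\alpha (x)z)-\beta (y)(\alpha (x)\ast z)$. Because $\alpha $ and $\beta $ are bijective, setting $u=\beta (y)$, $v=\alpha (x)$, $w=z$ lets $u,v,w$ range independently over all of $A$, so the vanishing of this difference for all $x,y,z$ is equivalent to $u(v\ast w)=u\ast (vw)$ for all $u,v,w\in A$, which is (\ref{left BiHomasso 2}), and hence (in the presence of (\ref{1.11})) equivalent to (\ref{left BiHomasso 1}). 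This will complete the proof.

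The main obstacle is the bookkeeping in the third term: one has to recognize that (\ref{4.1}), after the substitution $y\mapsto \alpha ^{-1}(y)$, produces precisely the cross-term $(\alpha ^{-1}\beta (y)\ast \alpha (x))\beta (z)$ needed to cancel the term arising from the bracket definition of $[\beta (x),y]\beta (z)$; this cancellation is what turns a six-term combinatorial mess into the clean two-term expression that matches the contributions from (i) and (ii). The other computations are routine once bijectivity and the commutativity $\alpha \beta =\beta \alpha $ are used freely.
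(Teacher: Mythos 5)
Your proof is correct and follows essentially the same route as the paper's: both reduce admissibility to the BiHom-Leibniz identity (delegating the BiHom-Lie structure to the known commutator result for products satisfying (\ref{BiNoviko}) with bijective structure maps), expand the bracket terms using (\ref{4.1}) with $y\mapsto\alpha^{-1}(y)$ together with (\ref{4.2})/(\ref{1.11}), and arrive at the same residual condition $\beta(y)\ast(\alpha(x)z)=\beta(y)(\alpha(x)\ast z)$, which bijectivity converts into left BiHom-associativity. The only cosmetic difference is that the paper pushes all the rewriting onto $[\alpha\beta(x),yz]$ and compares with the raw right-hand side, whereas you simplify each of the three bracket terms separately before assembling.
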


\begin{proof}
We know from the definition that
$(A, \mu , \alpha , \beta )$ is a BiHom-commutative algebra, while the fact that
$(A, \left[ \cdot , \cdot \right] , \alpha ,\beta )$ is a BiHom-Lie algebra follows from \cite{lmmp2}, Proposition 2.4.\\
The left-hand side of the BiHom-Leibniz identity (\ref{bihompoisson}) for $A^{-}$ is:
\begin{eqnarray*}
\left[\a\b(x) , yz \right]
&=& \a\b(x)\ast (yz)-\a^{-1}\b(yz)\ast \a^2(x)\\
&=& \a\b(x)\ast (\a(\a^{-1}(y))z)-(\a^{-1}\b(y)\b(\a^{-1}(z)))\ast \a\b(\a\b^{-1}(x))\\
&\overset{(\ref{4.1}), (\ref{4.2})}{=}& (\b(x)\ast y)\b(z)-(\a^{-1}\b(y)\ast \a(x))\b(z)\\
 && +\b(y)\ast (\a(x)z)-\b(y)(\a^{-1}\b(z)\ast \a^2\b^{-1}(x)).
\end{eqnarray*}
The right-hand side of (\ref{bihompoisson}) for $A^{-}$ is:
\begin{eqnarray*}
\left[\b(x) , y\right]\b(z)+\b(y)\left[\a(x) , z\right]
&=& (\b(x)\ast y)\b(z)-(\a^{-1}\b(y)\ast \a(x))\b(z)\\
 && +\b(y)(\a(x)\ast z)-\b(y)(\a^{-1}\b(z)\ast \a^2\b^{-1}(x)).
\end{eqnarray*}
It follows that $A^{-}$ satisfies (\ref{bihompoisson}) if and only if
$\b(y)\ast (\a(x)z)=\b(y)(\a(x)\ast z)$.
Since
\begin{eqnarray*}
\b(y)(\a(x)\ast z)=
\a(\a^{-1}\b(y))(\b(\a\b^{-1}(x))\ast z)\overset{(\ref{1.11})}{=} (\a^{-1}\b(y)\a(x))\ast \b(z)
\end{eqnarray*}
and
$x, y, z \in A$ are arbitrary, we obtain that $A^{-}$ is a BiHom-Poisson algebra if and only if
$\a(y)\ast (xz)=(yx)\ast \b(z)$ holds, that is, if and only if $A$ is
left BiHom-associative.
\end{proof}

\begin{example}
Let $(A, \mu)$ be a commutative and associative algebra, $\alpha, \beta: A\rightarrow A$ two commuting algebra morphisms,
and $D: A\rightarrow A$ a derivation such that $D\circ \alpha=\alpha\circ D$ and
$D\circ \beta=\beta\circ D$. By Corollary 3.10 in \cite{lmmp4},
$A_{\a ,\b}=(A, \bullet, \ast , \alpha, \beta)$ is a BiHom-Novikov-Poisson algebra, where $x\bullet y=\a(x)\b(y)$ and $x \ast y=\a(x)D(\b(y))$, for all $x, y\in A$. Then $A_{\a, \b}$ is left BiHom-associative if and only if
$ \a ^2(x)D(\a\b(y))\beta ^2 (z)=0$, for all $x, y, z\in A$.

Indeed, $A_{\a, \b}$ is left BiHom-associative if and only if
(\ref{left BiHomasso 2}) holds for $A_{\a, \b}$, i.e. if and only if
\begin{eqnarray*}
0&=&\a(x)\ast(y\bullet z)-\a(x)\bullet(y\ast z)\\
&=& \alpha (x)\ast (\alpha (y)\beta (z))-\alpha (x)\bullet (\alpha (y)D(\beta (z)))\\
&=&\alpha ^2(x)D(\alpha \beta (y)\beta ^2(z))-\alpha ^2(x)\alpha \beta (y)D(\beta ^2(z))\\
&=&\alpha ^2(x)D(\alpha \beta (y))\beta ^2(z)+\alpha ^2(x)\alpha \beta (y)D(\beta ^2(z))
-\alpha ^2(x)\alpha \beta (y)D(\beta ^2(z))\\
&=&\alpha ^2(x)D(\alpha \beta (y))\beta ^2(z).
\end{eqnarray*}
\end{example}

Now we prove that left BiHom-associativity is preserved by Yau twisting.
\begin{proposition} \label{coro 4.7}
Let $(A, \mu , \ast, \alpha, \beta)$ be a left BiHom-associative BiHom-Novikov-Poisson algebra and $\tilde{\alpha}, \tilde{\beta}: A\rightarrow A$ two  morphisms of BiHom-Novikov-Poisson algebras
such that any two of the maps $\alpha, \beta, \tilde{\alpha}, \tilde{\beta}$ commute. Then
$A_{(\tilde{\alpha}, \tilde{\beta})}:=(A, \tilde{\cdot}:=\mu \circ (\tilde{\alpha}\otimes \tilde{\beta}),\, \tilde{\ast}:=\ast \circ (\tilde{\alpha}\otimes \tilde{\beta}),\, \alpha\circ \tilde{\alpha}, \, \beta\circ \tilde{\beta})$
 is also a left BiHom-associative BiHom-Novikov-Poisson algebra.
\end{proposition}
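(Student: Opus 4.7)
The plan is to proceed in two steps. First, one needs to know that $A_{(\tilde{\alpha},\tilde{\beta})}$ is a BiHom-Novikov-Poisson algebra at all; this is the Yau twisting principle for BiHom-Novikov-Poisson algebras, of which Proposition~3.6 in \cite{lmmp4} (the case $\tilde{\alpha}=\alpha^{n}$, $\tilde{\beta}=\beta^{n}$) is the special instance, and which carries over to any pair $\tilde{\alpha},\tilde{\beta}$ of BiHom-Novikov-Poisson morphisms pairwise commuting with $\alpha,\beta$ by the usual line of argument (multiplicativity of $\tilde{\alpha},\tilde{\beta}$ allows one to transport each axiom into the twisted setting). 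With this in hand, the task reduces to verifying condition (\ref{left BiHomasso 1}) for the twisted algebra, namely
\[(\alpha\tilde{\alpha})(x)\,\tilde{\ast}\,(y\,\tilde{\cdot}\,z)=(x\,\tilde{\cdot}\,y)\,\tilde{\ast}\,(\beta\tilde{\beta})(z),\qquad\forall\,x,y,z\in A.\]

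I would expand both sides directly from the definitions. For the left-hand side, $y\,\tilde{\cdot}\,z=\tilde{\alpha}(y)\tilde{\beta}(z)$ gives
\[(\alpha\tilde{\alpha})(x)\,\tilde{\ast}\,(\tilde{\alpha}(y)\tilde{\beta}(z))=\tilde{\alpha}(\alpha\tilde{\alpha}(x))\ast\tilde{\beta}\bigl(\tilde{\alpha}(y)\tilde{\beta}(z)\bigr),\]
which, using multiplicativity of $\tilde{\beta}$ for $\mu$ together with the pairwise commutativity of $\alpha,\beta,\tilde{\alpha},\tilde{\beta}$, simplifies to $\alpha\tilde{\alpha}^{2}(x)\ast\bigl(\tilde{\alpha}\tilde{\beta}(y)\cdot\tilde{\beta}^{2}(z)\bigr)$. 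Applying (\ref{left BiHomasso 1}) in the original algebra to the triple $(\tilde{\alpha}^{2}(x),\tilde{\alpha}\tilde{\beta}(y),\tilde{\beta}^{2}(z))$ converts this to $\bigl(\tilde{\alpha}^{2}(x)\cdot\tilde{\alpha}\tilde{\beta}(y)\bigr)\ast\beta\tilde{\beta}^{2}(z)$. Expanding the right-hand side analogously, this time using multiplicativity of $\tilde{\alpha}$ for $\mu$ and the commutation $\tilde{\beta}\beta=\beta\tilde{\beta}$, yields the same expression, so the identity holds.

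The only obstacle is bookkeeping: one must invoke the correct morphism/commutativity property at each rewriting step, but no additional identity beyond (\ref{left BiHomasso 1}) enters the argument. Conceptually, the Yau twist only redistributes the action of $\tilde{\alpha}$ and $\tilde{\beta}$ between the two factors of each product, and the left BiHom-associativity identity is symmetric enough under this redistribution to be preserved automatically.
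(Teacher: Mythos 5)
Your proposal is correct and follows essentially the same route as the paper: cite the Yau-twisting result from \cite{lmmp4} to get that $A_{(\tilde{\alpha},\tilde{\beta})}$ is a BiHom-Novikov-Poisson algebra, then expand both sides of (\ref{left BiHomasso 1}) for the twisted operations and apply (\ref{left BiHomasso 1}) in $A$ to the triple $(\tilde{\alpha}^{2}(x),\tilde{\alpha}\tilde{\beta}(y),\tilde{\beta}^{2}(z))$ — exactly the substitution the paper uses. (Minor remark: the general twisting statement for arbitrary commuting morphisms is already available in \cite{lmmp4}, so your hedge about it ``carrying over'' from the power case is unnecessary.)
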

\begin{proof}
From \cite{lmmp4}, we know that $A_{(\tilde{\alpha}, \tilde{\beta})}$ is a BiHom-Novikov-Poisson algebra.
For all $x, y, z \in A$,\\[2mm]
${\;\;\;\;\;\;\;\;\;\;}$
$(x \widetilde{\cdot} y)\widetilde{\ast} \b\widetilde{\b}(z)-\a\widetilde{\a}(x)\widetilde{\ast} (y \widetilde{\cdot} z)$
\begin{eqnarray*}
&=& (\widetilde{\a}(x) \widetilde{\b}(y))\widetilde{\ast} \b\widetilde{\b}(z)-\a\widetilde{\a}^2(x)\ast \widetilde{\b}(y \widetilde{\cdot} z)\\
&=& (\widetilde{\a}^2(x) \widetilde{\a}\widetilde{\b}(y))\ast \b\widetilde{\b}^2(z)-\a\widetilde{\a}^2(x)\ast (\widetilde{\a}\widetilde{\b}(y)  \widetilde{\b}^2(z))\\
&=& (\widetilde{\a}^2(x) \widetilde{\a}\widetilde{\b}(y))\ast \b(\widetilde{\b}^2(z))-\a(\widetilde{\a}^2(x))\ast (\widetilde{\a}\widetilde{\b}(y)  \widetilde{\b}^2(z))
\overset{(\ref{left BiHomasso 1})}{=} 0,
\end{eqnarray*}
thus proving  (\ref{left BiHomasso 1}) for
$A_{(\tilde{\alpha}, \tilde{\beta})}$ and finishing the proof.
\end{proof}

In the context of Proposition \ref{coro 4.7} and assuming moreover that $\alpha, \beta, \tilde{\alpha}, \tilde{\beta}$
are bijective,
the BiHom-Lie bracket in the BiHom-Poisson algebra $(A_{(\tilde{\alpha}, \tilde{\beta})})^{-}$ is given by
$\widetilde{\left[\cdot , \cdot \right]}=\left[\cdot , \cdot  \right]\circ (\widetilde{\a} \otimes \widetilde{\b})$,
where $\left[\cdot , \cdot \right]$ is the BiHom-Lie bracket in the BiHom-Poisson algebra $A^{-}$.

The next result is a special case of  Proposition \ref{coro 4.7}.
\begin{corollary} \label{coro 4.8}
Let $(A, \mu, \ast, \alpha, \beta)$ be a left BiHom-associative BiHom-Novikov-Poisson algebra. Then so is
$A^{n}:=(A, \mu \circ (\a^n\otimes \b^n),\, \ast \circ (\a^n\otimes \b^n),\, \alpha^{n+1}, \, \beta^{n+1})$,
 for each $n\geq 0$.
\end{corollary}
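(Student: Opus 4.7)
The plan is to recognize Corollary \ref{coro 4.8} as a direct instance of Proposition \ref{coro 4.7} applied to $\tilde{\alpha} := \alpha^n$ and $\tilde{\beta} := \beta^n$. To legitimize this substitution I need to verify two things: that $\alpha^n$ and $\beta^n$ are morphisms of BiHom-Novikov-Poisson algebras from $A$ to itself, and that any two of the four maps $\alpha, \beta, \alpha^n, \beta^n$ commute.

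The commutativity hypothesis is immediate: since $\alpha$ and $\beta$ commute by assumption (as the structure maps of $A$), every power of $\alpha$ commutes with every power of $\beta$, and of course $\alpha^n$ commutes with $\alpha$ and $\beta^n$ with $\beta$. For the morphism condition, I would first observe that $\alpha$ itself is a morphism $A\to A$ of BiHom-Novikov-Poisson algebras: by definition of a BiHom-Novikov-Poisson algebra, $\alpha$ commutes with $\beta$, and $\alpha$ is multiplicative both with respect to $\mu$ (part of the BiHom-associative/commutative structure) and with respect to $\ast$ (condition (\ref{BiNovik}) in the BiHom-Novikov part); of course $\alpha\circ\alpha=\alpha\circ\alpha$ trivially. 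The same holds for $\beta$. Since the composition of morphisms of BiHom-Novikov-Poisson algebras is again such a morphism, the iterates $\alpha^n$ and $\beta^n$ qualify.

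With these verifications in hand, Proposition \ref{coro 4.7} applies directly. Its conclusion produces the left BiHom-associative BiHom-Novikov-Poisson algebra
\[
A_{(\alpha^n,\,\beta^n)} = \bigl(A,\;\mu\circ(\alpha^n\otimes\beta^n),\;\ast\circ(\alpha^n\otimes\beta^n),\;\alpha\circ\alpha^n,\;\beta\circ\beta^n\bigr),
\]
and since $\alpha\circ\alpha^n=\alpha^{n+1}$ and $\beta\circ\beta^n=\beta^{n+1}$, this is exactly $A^n$.

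There is no real obstacle here; the only place one might slip is in double-checking that the definition of a BiHom-Novikov-Poisson algebra indeed bakes in multiplicativity of $\alpha$ and $\beta$ with respect to \emph{both} $\mu$ and $\ast$ (so that $\alpha^n,\beta^n$ really are morphisms for both products), but this is guaranteed by the BiHom-associative/BiHom-commutative clause together with (\ref{BiNovik}). The $n=0$ case is trivial, as $A^0=A$.
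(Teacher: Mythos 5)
Your proposal is correct and coincides with the paper's own proof, which simply invokes Proposition \ref{coro 4.7} with $\tilde{\alpha}:=\alpha^{n}$ and $\tilde{\beta}:=\beta^{n}$. The extra verifications you supply (that $\alpha^n,\beta^n$ are self-morphisms and that the four maps pairwise commute) are exactly the hypotheses the paper leaves implicit, and they are checked correctly.
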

\begin{proof}
Apply Proposition \ref{coro 4.7} for $\tilde{\alpha }:=\alpha^{n}$ and
$\tilde{\beta }:=\beta^{n}$.
\end{proof}

In the rest of this section, we show that left BiHom-associativity is compatible with the constructions in the previous sections. 
First we deal with tensor products.
\begin{proposition} \label{coro 4.9}
Let $(A_i, \cdot_i, \ast_i, \alpha_i, \beta_i)$ be a left BiHom-associative BiHom-Novikov-Poisson algebra, for $i=1, 2$, and let $A=A_1\otimes A_2$ be the BiHom-Novikov-Poisson algebra constructed in Theorem
\ref{tensor}. Then $A$ is left BiHom-associative.
\end{proposition}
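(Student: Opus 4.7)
The plan is to verify directly the left BiHom-associativity identity (\ref{left BiHomasso 1}) (or, equivalently, (\ref{left BiHomasso 2})) in the tensor product $A=A_1\otimes A_2$, exploiting the fact that the two terms in the definition of $\ast$ on $A$ combine in a way that mirrors the single identity on each tensor factor. Since $A$ is already known to be a BiHom-Novikov-Poisson algebra by Theorem~\ref{tensor}, it suffices to check just one of the two equivalent forms.

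I would choose to work with (\ref{left BiHomasso 2}), since it requires no rewriting across the two products. For elementary tensors $x=x_1\otimes x_2$, $y=y_1\otimes y_2$, $z=z_1\otimes z_2$ in $A$, write out $\alpha(x)\cdot (y\ast z)$ with $\alpha=\alpha_1\otimes \alpha_2$ and the tensor formula for $\ast$. This produces two summands of the form
\[
\alpha_1(x_1)\cdot(y_1\ast z_1)\otimes \alpha_2(x_2)\cdot(y_2\cdot z_2)
\;+\;
\alpha_1(x_1)\cdot(y_1\cdot z_1)\otimes \alpha_2(x_2)\cdot(y_2\ast z_2).
\]
On the other side, $\alpha(x)\ast(y\cdot z)$ expands analogously to
\[
\alpha_1(x_1)\ast(y_1\cdot z_1)\otimes \alpha_2(x_2)\cdot(y_2\cdot z_2)
\;+\;
\alpha_1(x_1)\cdot(y_1\cdot z_1)\otimes \alpha_2(x_2)\ast(y_2\cdot z_2).
\]

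Applying the identity (\ref{left BiHomasso 2}) in $A_1$ to the first tensor factor of the first summand, and (\ref{left BiHomasso 2}) in $A_2$ to the second tensor factor of the second summand, matches the two expressions term by term. There is no obstacle worth highlighting: the proof is essentially a bookkeeping exercise, and the structural reason it works is that left BiHom-associativity is a single identity (unlike, say, the BiHom-Novikov condition (\ref{Binovikov})), so it does not generate cross-terms that could obstruct the argument. Extending to arbitrary (non-elementary) tensors is immediate by bilinearity.
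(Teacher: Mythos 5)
Your proof is correct and follows essentially the same route as the paper's: a direct term-by-term verification on elementary tensors, matching the two summands produced by the tensor-product formula for $\ast$ against the single left BiHom-associativity identity in each factor. The only (harmless) difference is that you verify the equivalent form (\ref{left BiHomasso 2}) rather than (\ref{left BiHomasso 1}), which spares you the one application of (\ref{BHassoc}) that the paper needs to rewrite $(x_i\cdot_i y_i)\cdot_i\beta_i(z_i)$ as $\alpha_i(x_i)\cdot_i(y_i\cdot_i z_i)$; this substitution is legitimate because $A$ is already known to be a BiHom-Novikov-Poisson algebra by Theorem~\ref{tensor}, so (\ref{1.11}) holds and the two formulations coincide for $A$ as well as for each $A_i$.
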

\begin{proof}
Pick $x=x_1\otimes x_2, y=y_1\otimes y_2$ and $z=z_1\otimes z_2$ in $A$. Then we have:\\[2mm]
$[(x_1\ot x_2)\cdot (y_1\ot y_2)]\ast (\b_1(z_1)\ot \b_2(z_2))$
\begin{eqnarray*}
&=& [(x_1\cdot y_1)\ot (x_2\cdot y_2)]\ast (\b_1(z_1)\ot \b_2(z_2))\\
&=& ((x_1\cdot y_1)\ast \b_1(z_1))\ot ((x_2\cdot y_2)\cdot \b_2(z_2))+((x_1\cdot y_1)\cdot \b_1(z_1))\ot ((x_2\cdot y_2)\ast \b_2(z_2))\\
&\overset{(\ref{BHassoc}), (\ref{left BiHomasso 1})}{=}& (\a_1(x_1)\ast (y_1\cdot z_1))\ot (\a_2(x_2)\cdot (y_2\cdot z_2))+(\a_1(x_1)\cdot (y_1\cdot z_1))\ot (\a_2(x_2)\ast (y_2\cdot z_2))\\
&=& (\a_1(x_1)\ot \a_2(x_2))\ast [(y_1\ot y_2)\cdot (z_1\ot z_2)],
\end{eqnarray*}
thus proving (\ref{left BiHomasso 1}) for $A$ and
finishing the proof.
\end{proof}

In the context of Proposition \ref{coro 4.9} and assuming moreover that all structure maps are bijective,
the BiHom-Lie bracket in the BiHom-Poisson algebra $A^{-}$ is given by \\[2mm]
$\left[x_1\ot x_2, y_1\ot y_2 \right]$
\begin{eqnarray*}
&=&(x_1\ot x_2)\ast (y_1\ot y_2)-(\a_1^{-1}\b_1(y_1)\ot \a_2^{-1}\b_2(y_2))\ast (\a_1\b_1^{-1}(x_1)\ot \a_2\b_2^{-1}(x_2))\\
&=& (x_1\ast y_1)\otimes (x_2\cdot y_2) + (x_1\cdot y_1)\otimes (x_2\ast y_2)\\
&& -(\b_1(\a_1^{-1}(y_1))\cdot \a_1(\b_1^{-1}(x_1)))\otimes (\a_2^{-1}\b_2(y_2)\ast \a_2\b_2^{-1}(x_2))\\
&& -(\a_1^{-1}\b_1(y_1)\ast \a_1\b_1^{-1}(x_1))\otimes (\b_2(\a_2^{-1}(y_2))\cdot \a_2(\b_2^{-1}(x_2)))\\
&\overset{(\ref{commu})}{=}& (x_1\ast y_1)\otimes (x_2\cdot y_2) + (x_1\cdot y_1)\otimes (x_2\ast y_2)\\
&& - (x_1\cdot y_1)\otimes (\a_2^{-1}\b_2(y_2)\ast \a_2\b_2^{-1}(x_2))-(\a_1^{-1}\b_1(y_1)\ast \a_1\b_1^{-1}(x_1))\otimes (x_2\cdot y_2)\\
&=& (x_1\ast y_1- \a_1^{-1}\b_1(y_1)\ast \a_1\b_1^{-1}(x_1))\otimes (x_2\cdot y_2)\\
&& + (x_1\cdot y_1)\otimes (x_2\ast y_2 - \a_2^{-1}\b_2(y_2)\ast \a_2\b_2^{-1}(x_2))\\
&=& [x_1 , y_1]\otimes (x_2\cdot y_2)+ (x_1\cdot y_1)\otimes [x_2, y_2].
\end{eqnarray*}
Here $\left[x_i , y_i\right]$ is the BiHom-Lie bracket in the BiHom-Poisson algebra $A_i ^{-}$.

Next we prove that left BiHom-associativity is preserved by perturbations as in Theorem \ref{theorem 1}.

\begin{proposition} \label{coro 4.10}
Let $(A, \mu , \ast, \alpha , \beta )$ be a left BiHom-associative BiHom-Novikov-Poisson algebra  and $a\in A$ an element
satisfying $\a^2(a)=\b^2 (a)=a$.
Then the BiHom-Novikov-Poisson algebra $A'=(A, \diamond, \ast_{\a, \b}, \alpha^2, \beta^2)$ constructed in Theorem \ref{theorem 1} is also left BiHom-associative.
\end{proposition}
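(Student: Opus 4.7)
The plan is to verify the left BiHom-associativity identity (\ref{left BiHomasso 1}) for $A'$, namely
\[
\a^2(x)\ast_{\a,\b}(y\diamond z)=(x\diamond y)\ast_{\a,\b}\b^2(z),\quad \forall\, x,y,z\in A.
\]
First I would unwind the definitions of $\diamond$ and $\ast_{\a,\b}$ and push the outer copies of $\a$ and $\b$ inside, using their multiplicativity with respect to $\mu$, while collapsing the iterates acting on $a$ via $\a^2(a)=\b^2(a)=a$. A short computation then rewrites the left-hand side as $\a^3(x)\ast(\a\b(y)(\a\b(a)\b(z)))$ and the right-hand side as $(\a^2(x)(a\,\a(y)))\ast\b^3(z)$.

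The key step is to reshape the left-hand side so that it too acquires the factor $\b^3(z)$ outside the $\ast$. For this I would first apply the BiHom-associativity condition (\ref{BHassoc}) to the inner triple, rewriting $\a\b(y)(\a\b(a)\b(z))=(\b(y)\a\b(a))\b^2(z)$, and then invoke the hypothesis that $A$ itself is left BiHom-associative, in the form (\ref{left BiHomasso 1}), to migrate $\b^2(z)$ across the $\ast$. This turns the left-hand side into $(\a^2(x)(\b(y)\a\b(a)))\ast\b^3(z)$, so the required equality reduces to $\b(y)\a\b(a)=a\,\a(y)$.

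This last equality is exactly where the assumptions on $a$ combine with BiHom-commutativity: by (\ref{commu}) one has $\b(y)\a(\b(a))=\b(\b(a))\a(y)=\b^2(a)\a(y)$, which equals $a\,\a(y)$ thanks to $\b^2(a)=a$. I do not anticipate any serious obstacle here: every manipulation mirrors a step already carried out in the proofs of Lemma \ref{lemma 3.1} and Theorem \ref{theorem 1}, and the hypothesis $\a^2(a)=\b^2(a)=a$ plays exactly the same role, namely to absorb the superfluous powers of the structure maps that land on $a$ after unwinding the Yau-type twisting implicit in $\diamond$ and $\ast_{\a,\b}$.
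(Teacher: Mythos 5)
Your proposal is correct and follows essentially the same route as the paper's proof: both reduce the left-hand side to $(\alpha^2(x)(\beta(y)\alpha\beta(a)))\ast\beta^3(z)$ and then finish with BiHom-commutativity and $\beta^2(a)=a$. The only cosmetic difference is that you apply ordinary BiHom-associativity to the inner triple first and then use left BiHom-associativity once, whereas the paper applies left BiHom-associativity twice before invoking (\ref{BHassoc}); all steps check out.
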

\begin{proof}
We need to prove (\ref{left BiHomasso 1}) for $A'$. We compute: \\[2mm]
${\;\;\;\;\;}$
$\alpha ^2(x)\ast _{\alpha , \beta }(y\diamond z)$
\begin{eqnarray*}
&=&\alpha ^2(x)\ast _{\alpha , \beta }(\alpha (y)(\alpha (a)z))=
\alpha ^3(x)\ast (\alpha \beta (y)(\alpha \beta (a)\beta (z)))\\
&\overset{(\ref{left BiHomasso 1})}{=}&(\alpha ^2(x)\alpha \beta (y))\ast \beta (\alpha \beta (a)\beta (z))
\overset{(\ref{suplim})}{=}(\alpha ^2(x)\alpha \beta (y))\ast (\alpha (a)\beta ^2(z))\\
&=&\alpha (\alpha (x)\beta (y))\ast (\alpha (a)\beta ^2(z))\overset{(\ref{left BiHomasso 1})}{=}
((\alpha (x)\beta (y))\alpha (a))\ast \beta ^3(z)\\
&\overset{(\ref{suplim})}{=}& ((\alpha (x)\beta (y))\alpha \beta ^2(a))\ast \beta ^3(z)
\overset{(\ref{BHassoc})}{=}(\alpha ^2(x)(\beta (y)\alpha \beta (a)))\ast \beta ^3(z)\\
&\overset{(\ref{commu})}{=}&(\alpha ^2(x)(\beta ^2(a)\alpha (y)))\ast \beta ^3(z)
\overset{(\ref{suplim})}{=} (\alpha ^2(x)(\alpha ^2(a)\alpha (y)))\ast \beta ^3(z)\\
&=&\alpha (\alpha (x)(\alpha (a)y))\ast \beta ^3(z)=\alpha (x\diamond y)\ast \beta ^3(z)
=(x\diamond y)\ast _{\alpha , \beta }\beta ^2(z),
\end{eqnarray*}
finishing the proof.
\end{proof}

In the context of Proposition \ref{coro 4.10} and assuming moreover that $\alpha $ and $\beta $ are bijective,
 the BiHom-Lie bracket in the BiHom-Poisson algebra $(A')^{-}$ is given by
\begin{eqnarray*}
x\ast_{\a, \b} y- \a^{-2}\b^2(y)\ast_{\a, \b}\a^2\b^{-2}(x)
 &=& \a(x)\ast \b(y)- \a^{-1}\b^2(y)\ast \a^2\b^{-1}(x)\\
 &=& \a(x)\ast \b(y)- \a^{-1}\b(\b(y))\ast \a\b^{-1}(\a(x))\\
 &=& \left[\a(x), \b(y) \right]=\left[\cdot  , \cdot \right]\circ (\a\otimes \b)(x\otimes y),
\end{eqnarray*}
where $\left[\cdot , \cdot \right]$ is the BiHom-Lie bracket in the BiHom-Poisson algebra $A^{-}$.

Finally, we prove a similar result for perturbations as in Theorem \ref{theorem 2}.
\begin{proposition} \label{lastprop}
Let $(A, \mu , \ast , \alpha , \beta )$ be a left BiHom-associative BiHom-Novikov-Poisson algebra and $a\in A$
an element satisfying $\alpha ^2(a)=\beta ^2(a)=a$. Then the BiHom-Novikov-Poisson algebra
$\overline{A}=(A, \cdot _{\alpha , \beta }, \times , \alpha ^2, \beta ^2)$ constructed in Theorem \ref{theorem 2}
is also left BiHom-associative.
\end{proposition}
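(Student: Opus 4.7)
The plan is to verify condition (\ref{left BiHomasso 1}) for $\overline{A}$ directly. With the structure maps $\alpha^2,\beta^2$, this condition reads
\[
\alpha^2(x)\times(y\cdot_{\alpha,\beta}z) = (x\cdot_{\alpha,\beta}y)\times\beta^2(z),\qquad x,y,z\in A.
\]
Expanding both sides by means of $x\cdot_{\alpha,\beta}y=\alpha(x)\beta(y)$ and $x\times y=\alpha(x)\ast\beta(y)+\alpha(x)(\alpha(a)y)$, each side splits as a sum of a ``$\ast$-part'' and a ``$\mu$-part'', and I will match the two kinds of terms separately.

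For the $\ast$-parts, the LHS contribution is $\alpha^3(x)\ast(\alpha\beta(y)\beta^2(z))$ and the RHS contribution is $(\alpha^2(x)\alpha\beta(y))\ast\beta^3(z)$. These agree by a single application of the left BiHom-associativity hypothesis (\ref{left BiHomasso 1}) in $A$ to the triple $\alpha^2(x),\alpha\beta(y),\beta^2(z)$.

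For the $\mu$-parts, the identity to be established is
\[
\alpha^3(x)(\alpha(a)(\alpha(y)\beta(z)))=(\alpha^2(x)\alpha\beta(y))(\alpha(a)\beta^2(z)),
\]
and I would do this by reducing both sides to the common normal form $(\alpha^2(x)(a\alpha(y)))\beta^3(z)$. On the LHS, two applications of (\ref{BHassoc}) suffice: the first turns $\alpha(a)(\alpha(y)\beta(z))$ into $(a\alpha(y))\beta^2(z)$, and the second reassociates the outer product. On the RHS, multiplicativity of $\alpha$ gives $\alpha^2(x)\alpha\beta(y)=\alpha(\alpha(x)\beta(y))$; then (\ref{BHassoc}) moves $\alpha(a)$ outside, the identity $\alpha(a)=\beta(\alpha\beta(a))$ (a consequence of $\beta^2(a)=a$ and the commutation of $\alpha$ and $\beta$) allows (\ref{BHassoc}) to be applied once more, and finally BiHom-commutativity (\ref{commu}) together with $\beta^2(a)=a$ rewrites $\beta(y)\alpha\beta(a)$ as $a\alpha(y)$.

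The main obstacle is the bookkeeping of the various powers of $\alpha$ and $\beta$: each rearrangement must be brought into precisely the shape to which a hypothesis can be applied. Since bijectivity of $\alpha,\beta$ is not assumed, the only tools available are multiplicativity, the BiHom-associativity axiom (\ref{BHassoc}), BiHom-commutativity (\ref{commu}), left BiHom-associativity of $A$, and the idempotency conditions $\alpha^2(a)=\beta^2(a)=a$; everything else (in particular, any would-be cancellation via $\alpha^{-1}$ or $\beta^{-1}$) is off-limits.
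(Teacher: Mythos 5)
Your proposal is correct and takes essentially the same approach as the paper: the same split of condition (\ref{left BiHomasso 1}) for $\overline{A}$ into a $\ast$-part (settled by one application of left BiHom-associativity of $A$) and a $\mu$-part (settled by (\ref{BHassoc}), (\ref{commu}) and (\ref{suplim})). The only cosmetic difference is that the paper rewrites the $\mu$-part of the left-hand side into that of the right-hand side in a single forward chain, whereas you reduce both sides to the common normal form $(\alpha ^2(x)(a\alpha (y)))\beta ^3(z)$; the individual rewriting steps are the same.
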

\begin{proof}
We need to prove (\ref{left BiHomasso 1}) for $\overline{A}$. We compute:
\begin{eqnarray*}
\alpha ^2(x)\times (y\cdot _{\alpha , \beta }z)&=&\alpha ^2(x)\times (\alpha (y)\beta (z))\\
&=&\alpha ^3(x)\ast (\alpha \beta (y)\beta ^2(z))+\alpha ^3(x)(\alpha (a)(\alpha (y)\beta (z)))\\
&\overset{(\ref{left BiHomasso 1}), (\ref{BHassoc})}{=}&
(\alpha ^2(x)\alpha \beta (y))\ast \beta ^3(z)+\alpha ^3(x)((a\alpha (y))\beta ^2(z))\\
&\overset{(\ref{suplim})}{=}&(\alpha ^2(x)\alpha \beta (y))\ast \beta ^3(z)
+\alpha ^3(x)((\beta ^2(a)\alpha (y))\beta ^2(z))\\
&\overset{(\ref{commu})}{=}&(\alpha ^2(x)\alpha \beta (y))\ast \beta ^3(z)
+\alpha ^3(x)((\beta (y)\alpha \beta (a))\beta ^2(z))\\
&\overset{(\ref{BHassoc})}{=}&(\alpha ^2(x)\alpha \beta (y))\ast \beta ^3(z)
+\alpha ^3(x)(\alpha \beta (y)(\alpha \beta (a)\beta (z)))\\
&\overset{(\ref{suplim}), (\ref{BHassoc})}{=}&(\alpha ^2(x)\alpha \beta (y))\ast \beta ^3(z)
+(\alpha ^2(x)\alpha \beta (y))(\alpha (a)\beta ^2(z))\\
&=&\alpha (\alpha (x)\beta (y))\ast \beta ^3(z)+\alpha (\alpha (x)\beta (y))(\alpha (a)\beta ^2(z))\\
&=&(\alpha (x)\beta (y))\times \beta ^2(z)=(x\cdot _{\alpha , \beta }y)\times \beta ^2(z),
\end{eqnarray*}
finishing the proof.
\end{proof}

In the context of Proposition \ref{lastprop} and assuming moreover that $\alpha $ and $\beta $ are bijective, the
BiHom-Lie bracket in the BiHom-Poisson algebra $(\overline{A})^{-}$ is given by \\[2mm]
$x\times y-\alpha ^{-2}\beta ^2(y)\times \alpha ^2\beta ^{-2}(x)$
\begin{eqnarray*}
&=&\alpha (x)\ast \beta (y)+\alpha (x)(\alpha (a)y)-\alpha ^{-1}\beta ^2(y)\ast \alpha ^2\beta ^{-1}(x)
-\alpha ^{-1}\beta ^2(y)(\alpha (a)\alpha ^2\beta ^{-2}(x))\\
&=&\alpha (x)\ast \beta (y)-\alpha ^{-1}\beta (\beta (y))\ast \alpha \beta ^{-1}(\alpha (x))
+\alpha (x)(\alpha (a)y)-\alpha (\alpha ^{-2}\beta ^2(y))(\alpha (a)\alpha ^2\beta ^{-2}(x))\\
&\overset{(\ref{BHassoc})}{=}&\alpha (x)\ast \beta (y)-\alpha ^{-1}\beta (\beta (y))\ast \alpha \beta ^{-1}(\alpha (x))
+\alpha (x)(\alpha (a)y)-(\alpha ^{-2}\beta ^2(y)\alpha (a))\alpha ^2\beta ^{-1}(x)\\
&\overset{(\ref{commu})}{=}&\alpha (x)\ast \beta (y)-\alpha ^{-1}\beta (\beta (y))\ast \alpha \beta ^{-1}(\alpha (x))
+\alpha (x)(\alpha (a)y)-(\beta (a)\alpha ^{-1}\beta (y))\alpha ^2\beta ^{-1}(x)\\
&=&\alpha (x)\ast \beta (y)-\alpha ^{-1}\beta (\beta (y))\ast \alpha \beta ^{-1}(\alpha (x))
+\alpha (x)(\alpha (a)y)-\beta (a\alpha ^{-1}(y))\alpha (\alpha \beta ^{-1}(x))\\
&\overset{(\ref{commu})}{=}&\alpha (x)\ast \beta (y)-\alpha ^{-1}\beta (\beta (y))\ast \alpha \beta ^{-1}(\alpha (x))
+\alpha (x)(\alpha (a)y)-\alpha (x)(\alpha (a)y)\\
&=&\alpha (x)\ast \beta (y)-\alpha ^{-1}\beta (\beta (y))\ast \alpha \beta ^{-1}(\alpha (x))
=\left[\a(x), \b(y) \right]=\left[\cdot  , \cdot \right]\circ (\a\otimes \b)(x\otimes y),
\end{eqnarray*}
where $\left[\cdot , \cdot \right]$ is the BiHom-Lie bracket in the BiHom-Poisson algebra $A^{-}$.


\begin{center}
ACKNOWLEDGEMENTS
\end{center}

This paper was written while Claudia Menini was a member of the "National Group for Algebraic and
Geometric Structures and their Applications" (GNSAGA-INdAM). She was partially supported by MIUR
within the National Research Project PRIN 2017.
Ling Liu was supported by the NSF of China (Nos. 11801515, 11601486),
the Natural Science Foundation of Zhejiang Province (No. LY20A010003) and
the Foundation of Zhejiang Educational Committee (No. Y201942625).

\end{document}